\theoremstyle{plain}
\newtheorem{thm}{Theorem}[section]
\newtheorem{lm}[thm]{Lemma}
\newtheorem{cor}[thm]{Corollary}
\newtheorem{prop}[thm]{Proposition}
\theoremstyle{definition}
\newtheorem{de}[thm]{Definition}
\newcommand{\CC}{{\mathbb C}}
\newcommand{\ZZz}{{\mathbb Z}_{\geq 0}}
\newcommand{\NN}{{\mathbb N}}
\newcommand{\PP}{{\mathbb P}}
\newcommand{\Sym}{\operatorname{Sym}}
\newcommand{\rk}{\operatorname{rk}}
\newcommand{\GL}{\operatorname{GL}\nolimits}
\newcommand{\Ginf}{\GL_{\infty}}
\newcommand{\p}{p}
\newcommand{\q}{q}
\newcommand{\Mat}{{\operatorname{Mat}}}
\newcommand{\Mato}{\Mat_{\infty}}
\newcommand{\Mats}{{\Mato^{s}}}   
\newcommand{\ma}{M}                        
\newcommand{\MatfinoneN}[1][N]{{(\Mat_{{#1},{#1}})^{s}}}
\newcommand{\Symo}{X_{\infty}}
\newcommand{\Symp}{\Symo^{\p}}   
\newcommand{\Symfino}{{X_l}}
\newcommand{\Symfinone}{{X_l^{\p}}}
\newcommand{\SymfinoneN}[1][N]{{X_{#1}^{\p}}}
\newcommand{\Skew}{Y}
\newcommand{\Alto}{\Skew_{\infty}}
\newcommand{\Altq}{{\Alto^{\q}}}   
\newcommand{\Altfino}{{\Skew_l}}
\newcommand{\Altfinone}{{\Skew_l^{\q}}}
\newcommand{\AltfinoneN}[1][N]{{\Skew_{#1}^{\q}}}
\newcommand{\col}{c}                        
\newcommand{\Tot}{T}                        
\newcommand{\x}{x}                          
\newcommand{\xma}{x_{\mathrm{ma}}}          
\newcommand{\xsym}{x_{\mathrm{sym}}}          
\newcommand{\xalt}{x_{\mathrm{alt}}}          
\newcommand{\xcol}{x_{\mathrm{col}}}          
\newcommand{\xfin}{x_{\mathrm{fin}}}          
\newcommand{\Colo}{C_{\infty}}
\newcommand{\Coln}{\Colo^n}
\newcommand{\Colfino}{{C_l}}
\newcommand{\Colfinone}{{C_l^{n}}}
\newcommand{\ColfinoneN}[1][N]{{C_{#1}^{n}}}
\newcommand{\Cinf}{\Colo}
\newcommand{\Cinfw}{\CC^{\infty}}
\newcommand{\subsp}{Z}      
\begin{document}
\title[Finiteness properties of congruence classes of infinite matrices]{Finiteness properties of congruence classes of infinite matrices}
\author[R.H.~Eggermont]{Rob H. Eggermont}
\address[Rob H. Eggermont]{
Department of Mathematics and Computer Science\\
Technische Universiteit Eindhoven\\
P.O. Box 513, 5600 MB Eindhoven, The Netherlands}
\thanks{The author is supported by a Vidi grant from
the Netherlands Organisation for Scientific Research (NWO)}
\email{r.h.eggermont@tue.nl}

\date{Oktober 22, 2014}

\maketitle

\begin{abstract} We look at spaces of infinite-by-infinite matrices, and consider closed subsets that are stable under simultaneous row and column operations. We prove that up to symmetry, any of these closed subsets is defined by finitely many equations.
\end{abstract}

\section{Introduction}
\subsection{} Consider an infinite-dimensional affine space $X$ with an action by a large group $G$ of symmetries. Very little is known about finiteness properties of such spaces, and it is a big open problem to determine the extent of these properties. One such finiteness property is (set-theoretic) Noetherianity under the action of $G$, or simply $G$-Noetherianity, which is to say that $X$ does not contain any infinite strictly descending chain of $G$-stable closed subsets.

\subsection{} We consider the spaces $\Symo$ of infinite-by-infinite symmetric matrices and $\Alto$ of infinite-by-infinite skew-symmetric matrices. The group $\Ginf = \bigcup_{n\in\NN}\GL_n$ acts on these spaces by simultaneous row and column operations. That is to say, we have $g\cdot \ma = g\ma g^T$ for $g\in\Ginf$ and $\ma$ an element of $\Symo$ or of $\Alto$.

A (weak) version of our main theorem is the following.

\begin{thm}\label{thm:mainweak} The space $\Symp\times\Altq$ is $\Ginf$-Noetherian.
\end{thm}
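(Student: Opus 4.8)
The plan is to prove, more generally and by induction, that a ``master space'' $\Tot=\Symp\times\Altq\times\Coln\times F$ --- finitely many infinite symmetric and skew-symmetric matrices, finitely many infinite columns, and a finite-dimensional affine factor $F$ --- is Noetherian under $\Ginf$ together with a finite-dimensional group acting on $F$ and on the ``bounded'' data; Theorem~\ref{thm:mainweak} is the case with no columns and $F$ a point. Throughout I will rely on the standard toolbox for Noetherianity up to symmetry: closed $G$-stable subsets and finite unions thereof inherit $G$-Noetherianity; the image of a $G$-equivariant morphism out of a $G$-Noetherian space is $G$-Noetherian as soon as that image is closed (in particular for surjections, since then $\phi(\phi^{-1}(Z))=Z$); $G$-Noetherianity is unaffected by replacing $X$ by $X\times F$ with $F$ finite-dimensional and $G$ by $G\times K$ with $K$ linear algebraic; and (the transfer lemma) if $S\subseteq X$ is closed, stable under a subgroup $H\le G$, and satisfies $G\cdot S\supseteq Z$, then $H$-Noetherianity of $S$ forces every descending chain below $Z$ to stabilize.

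The engine is that $\Symo$ by itself --- and likewise $\Alto$, and, as it turns out, $\Mat_{\infty,k}$ and $\Coln$ --- is $\Ginf$-Noetherian. For $\Symo$: an infinite-rank symmetric matrix can be brought by congruence into the block form $I_N\oplus M'$ for every $N$, so its $\Ginf$-orbit is dense; hence a proper closed $\Ginf$-stable subset has bounded rank, and in fact equals some $\{\rk\le k\}$, this being the closure of the rank-$k$ orbit --- so the only chains in $\Symo$ are $\Symo\supsetneq\{\rk\le k_1\}\supsetneq\cdots$. Moreover $\{\rk\le k\}$ is the $\Ginf$-equivariant surjective image of $\Mat_{\infty,k}\times\{\text{symmetric }k\times k\}$ under $(A,S)\mapsto ASA^{T}$, while $\Mat_{\infty,k}$ and $\Coln$ are $\Ginf$-Noetherian because their $\Ginf$-orbits are classified by the space of linear relations among their columns, so that their $\Ginf$-stable closed subsets correspond to closed subsets of the finite-dimensional variety $\bigsqcup_{j}\operatorname{Gr}(j,k)$, respectively $\bigsqcup_{j}\operatorname{Gr}(j,n)$. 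This settles the base case $p=q=0$ of the induction.

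For the inductive step (induction on the number $p+q$ of infinite matrices), take a descending chain $Z_0\supseteq Z_1\supseteq\cdots$ of closed $G$-stable subsets of $\Tot$ and project $Z_0$ to one of the matrix factors, say $\pi_1$ to the first symmetric one. By the engine, $\overline{\pi_1(Z_0)}$ is either $\{\rk M_1\le k\}$ for some finite $k$, or all of $\Symo$. In the bounded-rank case, every point of every $Z_i$ can be moved by $\Ginf$ so that $M_1=\bar M_1\oplus 0$ is supported on the first $k$ coordinates; passing to this slice and applying the transfer lemma, the problem becomes that of $\GL_k\times\Ginf$ acting on a master space with one fewer symmetric matrix --- the couplings between the first $k$ coordinates and the tails of the remaining matrices having become extra columns, and the first-$k$-coordinate blocks extra finite data --- which is handled by the induction hypothesis together with the toolbox.

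The hard case, and what I expect to be the main obstacle, is when $\overline{\pi_j(Z_0)}$ is all of $\Symo$ (respectively $\Alto$) for \emph{every} matrix projection $\pi_j$, so that no matrix can be peeled off by bounding its rank --- for instance on $Z=\{(M_1,M_2):M_2=2M_1\}$ every matrix has infinite rank at every point, yet $Z$ is not cut down by the projection argument. Here I would localize at a matrix entry $x$ (say the $(1,1)$-entry of $M_1$), splitting $Z_0$ into the locus $\{x=0\}$ and the closure of the locus $\{x\ne 0\}$, on which a Gaussian-elimination substitution identifies $Z_0$ equivariantly with a torus factor times a ``shifted'' copy of the whole problem. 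The delicate points are that the hyperplane $\{x=0\}$ is not $\Ginf$-stable, so one must run this argument for a parabolic subgroup of $\Ginf$ stabilizing the standard flag (it suffices to prove the stronger Noetherianity up to that subgroup); that the shift must be controlled by a secondary induction on a finer invariant --- the size of the diagonal block it absorbs, with the auxiliary columns carried along --- so that the recursion terminates; and that a descending chain below $Z_0$ is genuinely recovered from the chains on the two pieces. Verifying this bookkeeping is where essentially all the work lies; once it is in place, every chain stabilizes, so $\Tot$, and in particular $\Symp\times\Altq$, is $\Ginf$-Noetherian.
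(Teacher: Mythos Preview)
Your framework matches the paper's: prove Noetherianity for the larger master space $\Symp\times\Altq\times\Coln\times\CC^m$ by induction, with the base case handled by the column and finite-dimensional factors. Your bounded-rank reduction (peel off one matrix, push its residual data into extra columns and a larger finite factor) is also close in spirit to the paper's Lemma~3.2. The paper, however, uses a sharper invariant than the rank of any one projection: the \emph{tuple rank} $\rk(\xsym)=\inf_{\lambda\ne 0}\rk\bigl(\sum_i\lambda_i(\xsym)_i\bigr)$. This matters for exactly the example you raise: on $Z=\{M_2=2M_1\}$ every individual projection surjects onto $\Symo$, but the tuple rank is $0$, so in the paper's scheme this $Z$ falls under the bounded-rank lemma and is dispatched immediately by the induction hypothesis, rather than landing in the hard case.

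The genuine gap is your hard case. The localize-at-an-entry / parabolic / shift outline does not visibly terminate: after clearing the first row and column of $M_1$ you still have the \emph{same} number $p+q$ of infinite symmetric and skew matrices (their tails), plus more columns and more finite data, so your primary inductive quantity has not dropped, and you have not named a secondary invariant that does. Your own sentence ``verifying this bookkeeping is where essentially all the work lies'' is accurate --- that is all of the work, and it is not done here. The paper bypasses this entirely with a constructive density statement: Proposition~3.3 (proved via Lemmas~3.4--3.7 and Corollaries~3.6, 3.8) shows that once the tuple ranks of $\xsym$ and $\xalt$ are large relative to $l$, the orbit $\Ginf\cdot x$ already projects dominantly onto the finite truncation $\Symfinone\times\Altfinone\times\Colfinone$. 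Consequently any closed $\Ginf$-stable set containing points of arbitrarily high tuple rank must be all of $\Tot_{p,q,n,0}$ times a closed subset of the finite factor, and descending chains stabilize. The tuple-rank notion and this dominance-of-high-rank-orbits proposition are the key ideas your proposal is missing.
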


\subsection{} The article \cite{DE14} shows that secants of Grassmannians are defined in bounded degree. One of the main auxiliary results in this article is that the space $\Mats$ of $s$-tuples of infinite-by-infinite matrices is set-theoretically Noetherian under the action of two copies of $\Ginf$, one acting by means of row operations, and one acting by means of column operations. This result follows from Theorem~\ref{thm:mainweak}. In fact, this theorem implies set-theoretical Noetherianity of $\Mats$ under \emph{simultaneous} row and column operations.

\subsection{} Finiteness properties have also been studied from a more algebraic point of view. The article \cite{SamSnow12} discusses the concept of a twisted commutative algebra, which can be seen as a large commutative ring $A$ with a large group action by $\Ginf$. Sam and Snowden describe the concept of weak Noetherianity, which says that any strictly ascending chain of $\Ginf$-stable ideals in $A$ must be of finite length. Results from \cite{abeasis1980ideali} and \cite{Abeasis1980158} can be used to show that the coordinate rings of $\Symo$ and $\Alto$ are weakly Noetherian. The spaces $\Symo$ and $\Alto$ are closely related to the representation theory of $\mathrm{O}_{\infty}$-modules and $\mathrm{Sp}_{\infty}$-modules, as seen in \cite{SamSnow13}.

Aside from $p+q \leq 1$, it is not known whether the coordinate ring of $\Symp\times\Altq$ is weakly Noetherian. Theorem~\ref{thm:mainweak} implies that any strictly ascending chain of $\Ginf$-stable \emph{radical} ideals in this coordinate ring must be of finite length. This is the strongest finiteness property known about this ring when $p+q > 1$.

\subsection{} Theorem~\ref{thm:mainweak} may seem surprising from an intuitive point of view. After all, for any $l \in \ZZz$, the dimension of $\Symfinone$, the space of $\p$-tuples of symmetric $l\times l$ matrices, is $\p\frac{l(l+1)}{2}$, while the dimension of $\GL_l$ is only $l^2$. So for $\p > 1$, one would expect the space $\Symp$ not to be Noetherian under $\Ginf$, as the dimension of the quotient space increases as $l$ grows. Indeed, if $(\mathrm{Id},\ma) \sim (\mathrm{Id},\ma')$ under $\Ginf$, then there must be an orthogonal matrix $O \in \Ginf$ such that $O\ma O^T =\ma'$, which is not the case generically. In fact, let $\ma, \ma'$ be diagonal matrices with entries $\lambda_1,\lambda_2,\ldots$ and $\lambda_1',\lambda_2',\ldots$. If at least one of the $\lambda_i$ is not equal to any of the $\lambda_j'$, then $(\mathrm{Id},\ma)$ and $(\mathrm{Id},\ma')$ have distinct orbits under $\Ginf$.

The above intuition is incorrect however. In the above example, if the diagonal matrix $\ma$ contains an infinite subset of pairwise distinct entries, then the $\Ginf$-orbit of $(\mathrm{Id},\ma)$ will in fact be dense in $\Symo^2$. In particular, even though the orbits of $(\mathrm{Id},\ma)$ and $(\mathrm{Id},\ma')$ are distinct, one can show that in this case, $(\mathrm{Id},\ma')$ is contained in the \emph{closure} of the orbit of $(\mathrm{Id},\ma)$.

The reason for this is that in the Zariski topology, each polynomial is only defined on a finite part of the matrix. This means that it suffices to show that for each $l$, there is $g\in\Ginf$ such that the first $l\times l$ blocks of $g\cdot (\mathrm{Id},\ma)$ equal the first $l\times l$ blocks of $(\mathrm{Id},\ma')$. The latter statement seems plausible if $\ma$ is sufficiently generic, and we will prove our main theorem by proving comparable statements.

\subsection{} One may wonder why we do not use the action $g\cdot M = gMg^{-1}$, rather than $g\cdot M = gMg^T$. One reason for this is that the former action does not preserve symmetric and skew-symmetric matrices, which are some of our objects of interest. In fact, knowledge of the (closures of the) orbits of symmetric and skew-symmetric matrices, and its relation to the rank of said matrices, is essential for our proofs. We don't have such knowledge for $g\cdot M = gMg^{-1}$, and there is more to the orbits than merely the rank of a matrix. For example, the orbit of the identity matrix consists of a single point. Furthermore, at some point in our proof, we work with limits of elements of $\Ginf$. While this is not a problem when working with $g\cdot M = gMg^T$, as this definition works for any matrix $g$ rather than just an invertible matrix, it can be a problem if we work with $g\cdot M = gMg^{-1}$.

The above reasons notwithstanding, we do not know whether a similar result to Theorem~\ref{thm:mainweak} holds if we work with $g\cdot M = gMg^{-1}$.

\subsection{} We hope that these results will eventually have applications similar to those mentioned in \cite{DE14}. Moreover, we hope that the method of proof might be of use in the study of symmetric and skew-symmetric $3$-tensors, for which no results are known regarding Noetherianity.

\section{Main theorem}
Let $\Cinfw = \bigcup_{n\in\NN}\CC^n$, and let $\Cinf = \{(c_n)_{n\in\NN}|\forall n: c_n\in\CC\}$. Note that we may view $\Cinf$ both as an infinite-dimensional affine space (with coordinate ring $\CC[x_i:i\in\NN]$) and as a vector space. We consider the space $\Mato = \{\ma = (m_{i,j})_{i,j\in\NN}: \forall i,j: m_{i,j}\in \CC\} \cong \mathrm{Hom}(\Cinfw,\Cinf)$.

Let $\Ginf = \bigcup_{n \in \NN} \GL(\CC^n)$. It acts on $\Cinf$ and $\Cinfw$ by left multiplication (viewing elements of these spaces as column matrices), and on $\Mato$ by $g\cdot \ma = g \ma g^T$.

We can decompose an element $\ma$ of $\Mato$ as $(\frac{\ma+\ma^T}{2},\frac{\ma-\ma^T}{2}) \in \Symo\times\Alto$, where $\Symo$ is the subspace of symmetric matrices in $\Mato$, and $\Alto$ is the subspace of skew-symmetric matrices in $\Mato$. The decomposition $\Mato = \Symo \times \Alto$ is canonical, and the action of $\Ginf$ respects this decomposition. Note that the spaces $\Mato,\Symo$, and $\Alto$ are all infinite-dimensional affine spaces (with coordinate rings $\Sym(\Cinfw\otimes\Cinfw),\Sym(\Sym^2(\Cinfw))$, and $\Sym(\bigwedge^2(\Cinfw))$ respectively).

We now come close to stating our main theorem. Let $X$ be a topological space, and let $G$ be a group acting on $X$. We say $X$ is $G$-Noetherian if every strictly descending chain of $G$-stable closed subsets has finite length.

\begin{thm}[Main Theorem]\label{thm:main} Let $\p,\q,n,m\in \ZZz$. Then the space $\Symp\times\Altq\times\Colo^n\times \CC^m$ is $\Ginf$-Noetherian.
\end{thm}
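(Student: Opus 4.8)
The plan is to prove the statement by Noetherian induction on a suitable well-ordered invariant attached to $\Ginf$-stable closed subsets, reducing the infinite-dimensional problem to finite-dimensional data in a "stabilized" regime. Concretely, for a $\Ginf$-stable closed subset $\subsp \subseteq \Symp\times\Altq\times\Colo^n\times\CC^m$, I would first extract from $\subsp$ the generic ranks of the symmetric, skew-symmetric, and column components (the skew ranks are even), together with the information coming from the $\CC^m$ factor, which is $\Ginf$-fixed and hence just cuts out an ordinary closed subvariety of $\CC^m$. Using the known description of $\Ginf$-orbit closures of a single symmetric (resp. skew-symmetric) matrix in terms of rank — which the introduction flags as essential — one shows that a "sufficiently generic" point of $\subsp$ can be moved by a group element so that its first $l\times l$ blocks take a prescribed normal form: the symmetric parts become (a truncation of) a fixed rank-$r$ matrix, the skew parts a fixed rank-$2k$ matrix, etc. The point of passing to closures, as the introduction stresses, is that a polynomial only sees finitely many coordinates, so matching the first $l\times l$ blocks for every $l$ is enough.

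The core of the argument is then a stabilization/parametrization step: after fixing the discrete invariants, the remaining freedom in a generic point of $\subsp$ is governed by the part of the matrices "at infinity," i.e. by entries in rows/columns beyond the chosen normal-form block, and one packages this as a $\Ginf$-equivariant (dominant, or constructible-image) map from a product of fewer or smaller factors — ideally a space of strictly smaller "complexity," e.g. $\Symo^{\p-1}\times\cdots$ or the same tuple with one rank parameter dropped — onto a dense subset of $\subsp$. Here the $\Colo^n$ and $\CC^m$ factors are the base case engine: $\CC^m$ contributes ordinary Hilbert-basis Noetherianity, and $\Colo^n$ (tuples of column vectors) is handled by the same kind of "put the first $l$ coordinates in normal form" trick, which should already be essentially known or easy. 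Running Noetherian induction on this complexity invariant — decreasing either $\p+\q$, or a rank parameter, or the dimension of the $\CC^m$-variety — then closes the induction, because a strictly descending chain of $\Ginf$-stable closed subsets of $\subsp$ pulls back to one upstairs, where the inductive hypothesis applies.

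The main obstacle I expect is the stabilization step itself: proving that once the generic ranks are fixed, a generic point really can be normalized on every finite block \emph{simultaneously} by a \emph{single} $g\in\Ginf$ acting by $g\ma g^T$ on all of $\Symp\times\Altq\times\Colo^n$ at once, and then identifying the residual data with a genuinely smaller space in a way that is equivariant and has constructible (dense) image. This is delicate for two reasons. First, one must coordinate the normalizations of the $p$ symmetric and $q$ skew matrices simultaneously — a single orthogonal/symplectic-type change of basis must work for all of them, which is where genericity (an infinite supply of "fresh" basis directions) is used, and where one may need to pass to limits of elements of $\Ginf$, exactly the technical point the introduction defends by working with $g\ma g^T$ rather than $g\ma g^{-1}$. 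Second, one must control the closure operation: the image of the equivariant map is only constructible, so one argues at the level of closures and invokes that the ambient descending chain condition is inherited. Everything else — the reduction of the $\CC^m$ factor, the bookkeeping of rank invariants, the base cases $p+q\le 1$ and $n=0$ — should be comparatively routine, building on \cite{DE14}, \cite{abeasis1980ideali}, and \cite{Abeasis1980158}.
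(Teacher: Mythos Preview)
Your plan has the right architecture---induction on a complexity invariant, a normalization step for generic points, and an equivariant parametrization from a smaller space---and you correctly identify the simultaneous normalization of several (skew-)symmetric matrices under a single congruence $g\cdot M = gMg^T$ as the crux. But the proposal has a real gap at exactly this point.

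You propose to track ``the generic ranks of the symmetric, skew-symmetric, and column components'' and to normalize so that ``the symmetric parts become (a truncation of) a fixed rank-$r$ matrix,'' etc. Individual ranks are not enough of an invariant. Consider the tuple $(M,M)\in\Symo^2$ where $M$ has infinite rank: each component has arbitrarily high rank, yet no $g$ will send the first $l\times l$ blocks to two \emph{different} targets. The invariant the paper introduces, and the one your plan is missing, is the \emph{rank of the tuple}: the minimum of $\rk(\sum_i \lambda_i M_i)$ over nonzero scalar tuples $(\lambda_i)$. The argument then splits cleanly. If this tuple-rank is bounded by $r$, one writes down an explicit $\Ginf$-equivariant map from $\Tot_{\p-1,\q,n+r,m+\p^2}$ onto the bounded-rank locus (this is your ``parametrize by a smaller space'' idea, and it does work here, giving the inductive step on $(\p,\q,n)$ lexicographically). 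If the tuple-rank is unbounded, one shows that the orbit of any element of sufficiently high tuple-rank projects \emph{dominantly} to every finite truncation $\Symfinone\times\Altfinone\times\Colfinone$, so its closure is the whole space and there is nothing left to parametrize. Your proposed induction on ``a rank parameter'' collapses once you see this dichotomy: rank is not part of the induction variable, only $(\p,\q,n)$ is.

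The dominant-projection statement is where the actual work lies, and your instinct about ``an infinite supply of fresh basis directions'' and limits of group elements is aimed at the right place. Concretely, tuple-rank at least $ls$ (with $s=\p+\q$) guarantees an $l$-dimensional subspace $V$ with $M_1V+\cdots+M_sV$ of dimension $ls$ (proved via Noetherianity of $\PP^{s-1}$), and from there one builds an explicit $g$ placing identity blocks in prescribed positions, then uses a one-parameter limit to kill the remaining off-block entries. An extra factor of $2^s$ in the rank bound is spent iteratively zeroing the top-left $l\times l$ blocks. None of this goes through if you only know each $M_i$ separately has large rank, so the tuple-rank concept is the missing idea rather than a bookkeeping refinement. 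Finally, the references \cite{abeasis1980ideali}, \cite{Abeasis1980158} are not used as base cases in the proof; the base case is simply $(\p,\q,n)=(0,0,0)$, where $\Tot=\CC^m$.
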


Note that when $\p = \q = 0$, the main theorem is true, and  \cite{Hillar09} shows a stronger result: The coordinate ring of the space $\Colo^n\times \CC^m$ is Noetherian under the action of the symmetric group, a much smaller group than $\Ginf$. We will not need this result in our proof though.

\begin{cor} For all $s \in \ZZz$, the space $\Mats$ is $\Ginf$-Noetherian.
\end{cor}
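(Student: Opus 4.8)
The plan is to deduce the corollary immediately from Theorem~\ref{thm:main}, using the canonical $\Ginf$-equivariant splitting $\Mato = \Symo \times \Alto$ recorded above. First I would check that the map $\ma \mapsto \left(\tfrac{\ma+\ma^T}{2}, \tfrac{\ma-\ma^T}{2}\right)$ is a linear isomorphism of affine spaces $\Mato \to \Symo \times \Alto$, with inverse $(\xsym,\xalt) \mapsto \xsym + \xalt$, and that it intertwines the $\Ginf$-actions, since $g(\ma \pm \ma^T)g^T = g\ma g^T \pm (g\ma g^T)^T$. Being a linear isomorphism of affine spaces, it is in particular a homeomorphism for the Zariski topology. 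Taking the $s$-fold product and reordering the factors (which is also $\Ginf$-equivariant, as $\Ginf$ acts factorwise) yields a $\Ginf$-equivariant homeomorphism $\Mats \cong \Symo^s \times \Alto^s$.

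Next I would invoke the elementary fact that $G$-Noetherianity transfers along $G$-equivariant homeomorphisms: such a map induces an inclusion-preserving bijection between $G$-stable closed subsets of the two spaces, so a strictly descending chain of $G$-stable closed subsets on one side produces one of the same length on the other. Hence $\Mats$ is $\Ginf$-Noetherian if and only if $\Symo^s \times \Alto^s$ is.

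Finally I would apply Theorem~\ref{thm:main} with $\p = \q = s$ and $n = m = 0$: the space $\Symo^s \times \Altq|_{\q=s} \times \Colo^0 \times \CC^0$ is $\Ginf$-Noetherian, and since $\Colo^0 \times \CC^0$ is a single point this is just $\Symo^s \times \Alto^s$. Combined with the previous paragraph, this shows $\Mats$ is $\Ginf$-Noetherian. There is no genuine obstacle here: the whole content sits in the Main Theorem, and the only things requiring (routine) verification are the equivariance of the symmetric/skew decomposition and the stability of Noetherianity under equivariant homeomorphisms.
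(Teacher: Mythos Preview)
Your proof is correct and follows exactly the paper's approach: the paper's one-line argument is that $\Mats$ is isomorphic to $\Symo^s\times\Alto^s$ via a $\Ginf$-equivariant linear map, and you have simply spelled out the details of that map and why equivariant homeomorphisms transfer $\Ginf$-Noetherianity.
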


\begin{proof} The space $\Mats$ is isomorphic to $\Symo^s\times\Alto^s$ by means of a $\Ginf$-equivariant linear map.
\end{proof}

\begin{cor} Any strictly ascending chain of $\Ginf$-stable radical ideals in the coordinate ring of $\Symp\times\Altq\times\Coln\times \CC^m$ must be of finite length.
\end{cor}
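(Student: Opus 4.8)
The plan is to deduce this corollary from Theorem~\ref{thm:main} via the Hilbert Nullstellensatz, which turns the topological Noetherianity statement into its algebraic counterpart. Write $X = \Symp\times\Altq\times\Coln\times\CC^m$ and let $R = \CC[X]$ be its coordinate ring. Note first that $R$ is a polynomial ring over $\CC$ in countably many variables (it is the tensor product of the coordinate rings listed above, each built from a countable basis), and that $\Ginf$ acts on $R$ by $\CC$-algebra automorphisms compatibly with its action on $X$; concretely, $g\cdot I(Z) = I(g\cdot Z)$ for every Zariski-closed $Z\subseteq X$ and $V(g\cdot J) = g\cdot V(J)$ for every ideal $J\subseteq R$, where $I(\cdot)$ and $V(\cdot)$ denote the usual vanishing operations. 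In particular a radical ideal $J$ is $\Ginf$-stable precisely when the closed set $V(J)$ is.

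The one step requiring genuine input is the Nullstellensatz for $R$: I would first show that $J\mapsto V(J)$ is an inclusion-reversing bijection from the radical ideals of $R$ onto the Zariski-closed subsets of $X$, with inverse $Z\mapsto I(Z)$ — equivalently, that $I(V(J))=\sqrt J$ for every ideal $J$. This rests on a cardinality argument exploiting that $\CC$ is uncountable. If $\mathfrak m\subseteq R$ is a maximal ideal, then $R/\mathfrak m$ is a field generated over $\CC$ by countably many elements, hence of at most countable dimension over $\CC$; since $\CC(t)$ has uncountable dimension over $\CC$ (the elements $(t-a)^{-1}$ with $a\in\CC$ being linearly independent, by partial fractions), $R/\mathfrak m$ contains no element transcendental over $\CC$, so $R/\mathfrak m=\CC$ and $\mathfrak m$ is the ideal of a point of $X$. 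Thus every proper ideal of $R$ has a nonempty zero locus, and the strong form $I(V(J))=\sqrt J$ follows by the usual Rabinowitsch trick, applied to localizations $R[f^{-1}]\cong R[y]/(yf-1)$, which are again quotients of a polynomial ring in countably many variables over $\CC$.

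Granting the Nullstellensatz, the remainder is formal. Given a strictly ascending chain $J_1\subsetneq J_2\subsetneq\cdots$ of $\Ginf$-stable radical ideals in $R$, apply $V(\cdot)$ to obtain closed subsets $V(J_1)\supseteq V(J_2)\supseteq\cdots$; each is $\Ginf$-stable by the compatibility noted above, and the chain is strictly descending since $I(V(J_i))=J_i$ recovers the original ideals. By Theorem~\ref{thm:main}, $X$ is $\Ginf$-Noetherian, so this descending chain stabilizes, and hence the ascending chain of radical ideals has finite length.

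The main obstacle is therefore not the translation between the geometric and algebraic pictures, which is routine once this dictionary is in place, but the verification that the Nullstellensatz holds for a polynomial ring in countably infinitely many variables over $\CC$; this is classical, the only point needing care being the use of the uncountability of $\CC$.
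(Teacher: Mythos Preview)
Your proof is correct and follows the same route as the paper: both deduce the corollary from Theorem~\ref{thm:main} via the inclusion-reversing bijection between $\Ginf$-stable radical ideals and $\Ginf$-stable closed subsets. The paper simply asserts this bijection in one line, whereas you spell out why the Nullstellensatz holds for a polynomial ring in countably many variables over the uncountable field $\CC$; this extra justification is a welcome addition rather than a departure in strategy.
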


\begin{proof} The corollary follows from the fact that any $\Ginf$-stable radical ideal in the coordinate ring of $\Symp\times\Altq\times\Coln\times \CC^m$ corresponds to a $\Ginf$-stable closed subspace of $\Symp\times\Altq\times\Coln\times \CC^m$ by means of an inclusion-reversing bijection.
\end{proof}

\begin{cor} Any $\Ginf$-stable closed subspace $\subsp$ of $\Symp\times\Altq\times\Coln\times \CC^m$ is cut out by the $\Ginf$-orbits of finitely many equations.
\end{cor}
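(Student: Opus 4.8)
The plan is to obtain this as a routine consequence of the Main Theorem by Noetherian induction on $\Ginf$-stable closed subsets, so I do not anticipate any serious obstacle. Write $A = \Symp\times\Altq\times\Coln\times\CC^m$ and, for $f$ in the coordinate ring of $A$, let $V(f)$ denote its zero locus; more generally, for a set $S$ of functions put $V(S) = \bigcap_{f\in S}V(f)$. The assertion ``$\subsp$ is cut out by the $\Ginf$-orbits of $f_1,\dots,f_k$'' unwinds to $\subsp = \bigcap_{i=1}^k V(\Ginf\cdot f_i)$, where each $V(\Ginf\cdot f_i) = \bigcap_{g\in\Ginf}V(g\cdot f_i)$ is a $\Ginf$-stable closed subset of $A$. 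Note first that if the $f_i$ lie in the ideal $I(\subsp)$, then so do all their $\Ginf$-translates, since $\subsp$ is $\Ginf$-stable; hence $\subsp\subseteq\bigcap_i V(\Ginf\cdot f_i)$ holds automatically, and only the reverse inclusion has to be arranged, which amounts to choosing the $f_i$ well.

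Suppose toward a contradiction that some $\Ginf$-stable closed $\subsp\subseteq A$ is \emph{not} of this form. Then $\subsp\neq A$ (the whole space is cut out by the empty family of orbits), so $I(\subsp)$ contains a nonzero function $f_1$, and $\subsp_1 := V(\Ginf\cdot f_1)$ is a proper $\Ginf$-stable closed subset of $A$ containing $\subsp$ (proper because $\subsp_1\subseteq V(f_1)\subsetneq A$, a nonzero polynomial having proper zero locus). Since $\subsp$ is not cut out by the single orbit $\Ginf\cdot f_1$, the inclusion $\subsp\subseteq\subsp_1$ is strict, so $I(\subsp_1)\subsetneq I(\subsp)$ and we may pick $f_2\in I(\subsp)\setminus I(\subsp_1)$. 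Iterating: given $f_1,\dots,f_j\in I(\subsp)$, put $\subsp_j := \bigcap_{i=1}^j V(\Ginf\cdot f_i)$, a $\Ginf$-stable closed subset with $\subsp\subseteq\subsp_j\subsetneq\subsp_{j-1}$, the last inclusion being strict because $f_j\notin I(\subsp_{j-1})$ forces some point of $\subsp_{j-1}$ to lie outside $V(f_j)\supseteq V(\Ginf\cdot f_j)$; and since $\subsp_j\neq\subsp$ by hypothesis, we can again choose $f_{j+1}\in I(\subsp)\setminus I(\subsp_j)$.

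This produces an infinite strictly descending chain $A\supsetneq\subsp_1\supsetneq\subsp_2\supsetneq\cdots$ of $\Ginf$-stable closed subsets of $A$, contradicting the $\Ginf$-Noetherianity of $A$ from Theorem~\ref{thm:main}. Hence $\subsp$ is cut out by the $\Ginf$-orbits of finitely many equations. The only points needing a little care are that the whole space serves as the base case, that a nonzero polynomial on $A$ has proper zero locus (so each $\subsp_j$ is proper), and that $V(\Ginf\cdot f)\subseteq V(f)$, which is what drives the strict descent at each step; all three are elementary, and there is no substantial obstacle once Theorem~\ref{thm:main} is in hand.
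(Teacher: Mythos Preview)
Your proof is correct and follows essentially the same approach as the paper: build a strictly descending chain of $\Ginf$-stable closed sets by repeatedly choosing equations from $I(\subsp)$ that do not vanish on the current approximation, and invoke $\Ginf$-Noetherianity from Theorem~\ref{thm:main} to force termination. The only cosmetic difference is that the paper phrases it as a direct construction (the chain must stabilize at $\subsp$) whereas you phrase it by contradiction, but the content is identical.
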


\begin{proof} The ideal $I$ of $\subsp$ is defined by $\Ginf$-orbits of equations. Let $\subsp_0 = \Symp\times\Altq\times\Coln\times \CC^m$. Suppose we have a chain $\subsp_0\supsetneq \subsp_1\supsetneq \ldots \supsetneq \subsp_{k-1} \supseteq \subsp$, where $\subsp_i$ is cut out by the $\Ginf$-orbits of $i$ equations. If $\subsp_{k-1} \neq \subsp$, there is $f \in I$ that does not vanish on $\subsp_{k-1}$. Let $\subsp_k$ be the set of elements in $\subsp_{k-1}$ on which $\Ginf f$ vanishes. Then we have $\subsp_{k-1} \supsetneq \subsp_k \supseteq \subsp$, and $\subsp_k$ is cut out by the $\Ginf$-orbits of $k$ equations. By construction, each of the $\subsp_i$ is $\Ginf$-stable and closed, and since we cannot get an infinite strictly descending chain like this, we must have $\subsp_k = \subsp$ for some $k$.
\end{proof}

Note that this last corollary does not imply that the ideal of $\subsp$ is defined by the $\Ginf$-orbits of finitely many equations.

\section{Proof of the Main Theorem}
The rank of a linear map $V \to W$ is a well-defined element of $\ZZz\cup\{\infty\}$. It is invariant under the action of $\GL_V \times \GL_W$. We now introduce the concept of the rank of a tuple of linear maps.

\begin{de} Let $V,W$ be vector spaces, and let $\ma = (\ma_1,\ldots,\ma_s) \in \mathrm{Hom}(V,W)^s$. Then the \emph{rank} of $\ma$ is the infimum of $\rk(\sum_{i=1}^s \lambda_i\ma_i)$, where $\mathbf{\lambda} = (\lambda_1,\ldots,\lambda_s)$ runs over all non-zero elements of $\CC^s$.
\end{de}

We denote the rank of a tuple of linear maps $\ma$ by $\rk(\ma)$. Note that if $s > 0$, because $\ZZz \cup \{\infty\}$ is discrete, there is $(\lambda_1,\ldots,\lambda_s) \in \CC^s\setminus\{0\}$ such that $\rk(\sum_{i=1}^s \lambda_i\ma_i) = \rk(\ma)$. If $s=0$, we have $\rk(\ma) = \infty$ since we are taking the infimum of the empty set. It is easily seen that the rank of a tuple of linear maps is invariant under the action of $\GL_V\times \GL_W$. In particular, the rank of $\ma\in \Mats$ (viewing $\Mato$ as $\mathrm{Hom}(\Cinfw,\Cinf)$) is invariant under the action of $\Ginf$.

For convenience, we write $\Tot = \Tot_{\p,\q,n,m} = \Symp\times\Altq\times\Coln\times \CC^m$.  The following lemma will motivate our definition of rank of a matrix tuple.

\begin{lm}\label{lm:boundedrkNoeth} Let $(\p,\q,n,m) \in \ZZz^4$, and suppose the main theorem is true for all $(\p',\q',n',m')$ with $(\p',\q',n')$ lexicographically smaller than $(\p,\q,n)$. Then for all $r \in \ZZz$, the following sets are $\Ginf$-Noetherian:

\begin{itemize}
 \item $S_r = \{\x = (\xsym,\xalt,\xcol,\xfin) \in \Tot_{\p,\q,n,m}: \rk(\xsym) \leq r\}$;
 \item $\{\x \in \Tot: \rk(\xalt) \leq r\}$;
 \item $\{\x \in \Tot_{\p,\q,n,m}: \rk(\xcol) <n\}$.
\end{itemize}
\end{lm}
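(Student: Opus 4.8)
The key idea is that a bound on the rank of a tuple of symmetric (resp.\ skew-symmetric) matrices, or the failure of an $n$-tuple of columns to have full rank, allows us to ``split off'' a low-rank piece via a smaller group action, and then invoke the inductive hypothesis on a space with strictly smaller parameters $(\p',\q',n')$. Let me sketch the first case; the others are analogous.

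\textbf{The symmetric case.} Suppose $\x = (\xsym,\xalt,\xcol,\xfin) \in \Tot_{\p,\q,n,m}$ with $\rk(\xsym) \leq r$. Write $\xsym = (\ma_1,\ldots,\ma_\p)$. By the remark following the definition of rank (and since $\p > 0$, else $S_r$ is empty and there is nothing to prove, or rather $\rk(\xsym)=\infty>r$ so $S_r=\emptyset$), there is a nonzero $\lambda \in \CC^\p$ with $\rk(\sum_i \lambda_i \ma_i) \leq r$. After acting by $\GL_\p$ on the tuple $\xsym$ (which commutes with the $\Ginf$-action and is a finite-dimensional group, hence poses no Noetherianity obstruction when we stratify by the value of $\lambda$, i.e.\ by a point of $\PP^{\p-1}$, which is a Noetherian space), we may assume the last component $\ma_\p$ has rank $\leq r$. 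Now a symmetric matrix of rank $\leq r$ lies, up to the $\Ginf$-action, in a ``bounded'' locus: there is $g \in \Ginf$ (or a limit thereof, working in the closure) such that $g \ma_\p g^T$ is supported in the top-left $r \times r$ block. The plan is to set up a $\Ginf$-equivariant map from (a dense open part of) $S_r$ to $\Tot_{\p-1,\q,n,m'}$ for a suitable $m'$ (absorbing the $r\times r$ block and the ``residual'' finite data into the $\CC^{m'}$ factor, while the remaining $\ma_1,\ldots,\ma_{\p-1}$ get modified by row/column operations coming from clearing $\ma_\p$), and then apply the inductive hypothesis. A descending chain of $\Ginf$-stable closed subsets of $S_r$ pulls back to one of $\Tot_{\p-1,\q,n,m'}$, which must stabilize; one closes the remaining ``boundary'' stratum by Noetherian induction on $r$.

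\textbf{The column case.} If $\rk(\xcol) < n$, then the $n$ columns are linearly dependent, so after a $\GL_n$-stratification we may assume the last column is a $\CC$-linear combination of the first $n-1$; that linear combination is a point of $\CC^{n-1}$ (or its projectivization/affinization), so the last column carries no new information beyond $n-1$ scalars, giving a $\Ginf$-equivariant map to $\Tot_{\p,\q,n-1,m+(n-1)}$, and we again invoke induction since $(\p,\q,n-1)$ is lexicographically smaller.

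\textbf{Main obstacle.} The delicate point is making the ``split off the rank-$\leq r$ symmetric piece'' step actually $\Ginf$-equivariant and well-defined on a large enough (stable, dense) subset, because the matrix $g$ that moves $\ma_\p$ into its $r\times r$ block is not canonical and need not depend algebraically on $\x$; one must instead work with the closure, stratify $S_r$ into the locus where $\ma_\p$ has rank exactly $r$ (handled by the equivariant-map argument, possibly after passing to an orbit-closure description of rank-$r$ symmetric matrices, cf.\ the orbit-closure facts the introduction alludes to) and the locus where $\rk(\ma_\p) \le r-1$ or more generally $\rk(\xsym)\le r-1$ (handled by induction on $r$, the base case $r$ large being where $\xsym$ has a zero in the pencil, i.e.\ $\rk(\xsym)=0$, forcing $\xsym=0$ generically along a component and reducing $\p$). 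Getting the bookkeeping of which finite-dimensional data lands in the $\CC^m$ factor right, and checking the pullback of a descending chain is again strictly descending where needed, is the technical heart.
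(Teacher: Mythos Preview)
Your instinct to reduce to lexicographically smaller $(\p',\q',n')$ is exactly right, and the three cases you identify are the correct ones. But the direction of your equivariant map is backwards, and that is precisely why you hit the obstacle you describe at the end. You try to build a $\Ginf$-equivariant map \emph{from} (a stratum of) $S_r$ \emph{to} $\Tot_{\p-1,\q,n,m'}$; this forces you to make non-canonical choices (which $\lambda$ realises the minimum, which $g$ puts $\ma_\p$ into an $r\times r$ block), and you correctly observe that these choices need not vary algebraically or be $\Ginf$-equivariant. Your proposed fix---stratify by $\PP^{\p-1}$, induct on $r$, pass to orbit closures---might be salvageable, but you have not actually carried it out, and the bookkeeping you flag as ``the technical heart'' is left completely open.

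The paper avoids all of this by reversing the arrow: it writes down an explicit polynomial $\Ginf$-equivariant map $\phi:\Tot_{\p-1,\q,n+r,m+\p^2}\to\Tot_{\p,\q,n,m}$ whose image is exactly $S_r$. Concretely, given $(\p-1)$ symmetric matrices $(\xsym)_1,\ldots,(\xsym)_{\p-1}$, $r$ extra columns $\xcol$, and a $\p\times\p$ scalar matrix $(\lambda_{i,j})$, one sets
\[
\phi(\x)_i \;=\; \sum_{j=1}^{\p-1}\lambda_{i,j}(\xsym)_j \;+\; \lambda_{i,\p}\,\xcol(\xcol)^T,\qquad i=1,\ldots,\p,
\]
and leaves the $\Altq$, $\Coln$, $\CC^m$ factors alone. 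Over $\CC$ every symmetric matrix of rank $\le r$ is of the form $cc^T$ for some $c\in\Colo^r$, so the image is exactly the locus where some nontrivial linear combination of the $\p$ symmetric matrices has rank $\le r$, i.e.\ $S_r$. Since the source is $\Ginf$-Noetherian by the inductive hypothesis and the continuous equivariant image of a $G$-Noetherian space is $G$-Noetherian, you are done in one line---no stratification, no induction on $r$, no orbit-closure arguments. The skew and column cases are handled by the analogous parametrisations. The moral: when you need a variety to be Noetherian, it is far easier to \emph{surject onto it} from something Noetherian than to \emph{map out of it} into something Noetherian.
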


\begin{proof} We merely prove that $S_r$ is $\Ginf$-Noetherian; the other sets can be shown to be $\Ginf$-Noetherian by an analogous proof.

If $\p = 0$, we have $S_r = \emptyset$, which is $\Ginf$-Noetherian. Suppose $\p > 0$. We consider the map $\phi: \Tot_{\p-1,0,r,\p^2} \to \Tot_{\p,0,0,0}$ defined as follows. Let $\x \in \Tot_{\p-1,0,r,\p^2}$, and write $\x = (\xsym, \xcol,\xfin)$ with $\xsym \in \Symo^{\p-1}$, $\xcol \in \Colo^r$, and $\xfin = (\lambda_{i,j})_{i,j=1}^{\p} \in \CC^{\p^2}$. Now define $\phi(\x) \in \Symp$ by $\phi(\x)_i = \sum_{j=1}^{\p-1}\lambda_{i,j}(\xsym)_j + \lambda_{i,\p}\xcol(\xcol)^T$ for $i \in \{1,\ldots,\p\}$. Note that this map is $\Ginf$-equivariant, and its image consists of all elements in $\Symp$ of rank at most $r$. We extend $\phi$ to a map $\Tot_{\p-1,\q,n+r,m+\p^2} \to \Tot_{\p,\q,n,m}$, and observe that its image is $S_r$. Since $(\p-1,\q,n+r)$ is lexicographically smaller than $(\p,\q,n)$, the space $\Tot_{\p-1,\q,n+r,m+\p^2}$ is $\Ginf$-Noetherian, and hence so is $S_r$.
\end{proof}

The upshot of this lemma is that if $\subsp \subseteq \Tot$ is a closed $\Ginf$-stable subset that only contains elements $\x$ with $\rk(\xsym) < r$, $\rk(\xalt) < r$, or $\rk(\xcol) < n$, then it is $\Ginf$-Noetherian. In particular, if there would be an infinite strictly descending chain of closed $\Ginf$-stable subsets of $\Tot$, then for each element $\subsp$ of this chain and for all $r \in \ZZz$, there must be $\x \in \subsp$ such that $\rk(\xsym) \geq r$, $\rk(\xalt)\geq r$, and $\rk(\xcol) = n$.

For $l \in \ZZz$, let $\Symfino$, respectively $\Altfino$ be the space of symmetric, respectively skew-symmetric, $l\times l$ matrices, and let $\Colfino = \CC^l$. Our main proposition will be the following.

\begin{prop}\label{prop:highrank-denseorbit} Let $l \in \ZZz$. Then for any $\x = (\xsym,\xalt,\xcol) \in \Tot_{\p,\q,n,0} = \Symp\times \Altq\times \Coln$ with $\rk(\xsym) \gg 0$, $\rk(\xalt) \gg 0$, and $\rk(\xcol) = n$, the projection of $\Ginf \x$ to $\Symfinone\times \Altfinone \times \Colfinone$ is dominant.
\end{prop}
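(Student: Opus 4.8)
The plan is to prove dominance by explicitly constructing, for a fixed target $(\bar\xsym,\bar\xalt,\bar\xcol)$ in a dense open subset of $\Symfinone\times\Altfinone\times\Colfinone$, an element $g\in\Ginf$ whose action sends $\x$ to something agreeing with the target in the top-left $l\times l$ block (and first $l$ coordinates of the column part). Because the Zariski topology only sees finitely many coordinates, it is enough to hit, for each $l$, a dense subset of the $l\times l$ truncation; and since $\Ginf = \bigcup_N\GL_N$, we are free to work with a large but finite $N\gg l$ and an element $g\in\GL_N$. So the statement to establish is: if the ranks $\rk(\xsym),\rk(\xalt)$ are large enough (depending on $l$) and $\rk(\xcol)=n$, then the map $g\mapsto \pi_l(g\cdot\x)$ from $\GL_N$ to $\Symfinone\times\Altfinone\times\Colfinone$ has dense image.

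First I would handle the column part $\xcol\in\Coln$. The condition $\rk(\xcol)=n$ means the $n$ vectors $(\xcol)_1,\dots,(\xcol)_n\in\Cinf$ are ``generically independent'' in the sense that no nonzero linear combination is the zero vector; concretely, there is an $N_0$ so that their truncations to $\CC^{N_0}$ are linearly independent. Acting by $g\in\GL_{N_0}$, the truncated vectors $\pi_l(g(\xcol)_i)$ can be made to equal any prescribed $n$-tuple of vectors in $\Colfino=\CC^l$ (this is just the surjectivity of $\GL_{N_0}$ acting on $n$-tuples of independent vectors onto all $n$-tuples — here one uses $N_0$ large enough, and for the simultaneous-action subtlety one keeps track of which rows one is still free to modify). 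The real content is then reducing to the symmetric and skew-symmetric blocks with the column directions already ``used up.''

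The core of the argument is the symmetric/skew part, and this is where I expect the main obstacle to lie. The idea is: by choosing a coordinate $\lambda$ with $\rk(\sum\lambda_i(\xsym)_i)$ large, one of the symmetric matrices (after a harmless change of basis in $\CC^\p$, which is an allowed $\Ginf$-invariant move since $\Ginf$ commutes with it) has rank $\gg 0$; pick a large nondegenerate subspace on which it is a nondegenerate symmetric form, and similarly for the skew part a large subspace carrying a nondegenerate alternating form. One then wants to use the large orthogonal (resp. symplectic) group sitting inside $\Ginf$ — more precisely, the ability to move around inside an isotropic-type flag — to adjust the remaining $\p-1$ symmetric and $\q-1$ skew matrices freely on the $l\times l$ window while preserving the distinguished forms. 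The key linear-algebra lemma needed is that a generic symmetric (resp. skew) $l\times l$ matrix lies in the image: this should follow by a dimension count, exhibiting enough independent directions of deformation coming from $\mathfrak{gl}_N$-conjugation $X\mapsto \xi X + X\xi^T$ once $N$ is large relative to $l$ and the ranks are large. The main difficulty is doing all of this \emph{simultaneously} — a single $g$ must fix up the column part, the $\p$ symmetric matrices, and the $\q$ skew matrices at once — so the construction has to be organized so that the degrees of freedom used for one piece do not destroy the others, presumably by working on nested coordinate blocks $[1,l]\subset[1,N_1]\subset[1,N_2]\subset\cdots$ and using disjoint ``reservoirs'' of rows/columns guaranteed by the largeness of the ranks.

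Finally, I would assemble these pieces: choose $N$ large enough that all the rank hypotheses translate into honest finite-dimensional independence/nondegeneracy statements on $\CC^N$, compose the finitely many $\GL_N$-moves above into a single $g$, and conclude that $\pi_l(\Ginf\x)$ contains a dense (in fact open) subset of $\Symfinone\times\Altfinone\times\Colfinone$, which is exactly dominance. I expect the write-up to proceed by first isolating the ``generic target'' open set, then the three constructions, then the compatibility bookkeeping, with the dimension count for the symmetric/skew deformation being the technical heart.
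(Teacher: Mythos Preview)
Your high-level framing is right --- reduce to finite $N$, construct $g\in\GL_N$ hitting the $l\times l$ window --- but the core mechanism you propose diverges from the paper and has a real gap.

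Your plan is to single out one symmetric form of large rank, use its orthogonal stabilizer to move the remaining $p-1$ symmetric matrices, do the analogous thing with a symplectic stabilizer for the skew part, and justify surjectivity onto the $l\times l$ window by ``a dimension count'' on the infinitesimal action $X\mapsto \xi X+X\xi^T$. Two things are missing. First, the dimension count is never carried out, and it is not clear it succeeds: you would need the \emph{intersection} of an orthogonal and a symplectic stabilizer (since one $g$ must respect both distinguished forms) to still act with dense image on the $l\times l$ blocks of all remaining $p+q-2$ forms \emph{and} the columns; that intersection can be small, and nothing in your sketch controls it. Second, your approach only consumes the hypothesis that some individual matrices have large rank, whereas the statement gives you the much stronger $\rk(\xsym),\rk(\xalt)\gg 0$, i.e.\ \emph{every} nonzero linear combination has large rank. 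The paper's argument uses this full strength essentially, and without it the simultaneous adjustment seems to have no leverage.

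The paper proceeds quite differently and avoids stabilizers entirely. Writing $s=p+q$ and $\xma=(\xsym,\xalt)$, it first uses the joint rank hypothesis (via a short lemma: if $\rk(\ma)\ge ls$ then there is an $l$-dimensional $V'$ with $\dim(\ma V')=ls$) to find a $2^sl$-dimensional subspace $V$ on which the $s$ maps have images in general position, $\dim(\xma V)=s\,2^sl$. From this it deduces a concrete normal form (the key step): by repeatedly using that two (skew-)symmetric matrices of equal rank are $\GL$-equivalent, one can arrange, after acting by some $g$, that the first $l$ columns of $(\xma)_i$ are zero except for an $\mathrm{Id}_l$ block in the $(i{+}1)$-st row-block. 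A one-parameter limit then kills the remaining entries outside the first $l$ rows/columns. The column part $\xcol$ is handled compatibly by first parking $\xcol$ and all $(\xma)_i\xcol$ in a separate block of $(s{+}1)n$ coordinates, so it does not interfere. Finally, with this normal form in hand, an explicit unipotent
\[
g=\mathrm{Id}+\begin{pmatrix}0_l & \tfrac12(\ma'_1-\ma''_1) & \cdots & \tfrac12(\ma'_s-\ma''_s) & c'\\ & & 0 & & \end{pmatrix}
\]
is written down and a direct computation shows the first $l\times l$ block of $g(\xma)_ig^T$ equals any prescribed $\ma'_i$ and $\pi_l(g\xcol)=c'$. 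So the paper's ``technical heart'' is not a dimension count but the free-subspace lemma plus the iterated normal-form lemma; once those are in place the construction of $g$ is a one-line formula. Your nested-blocks/reservoirs intuition is in the right spirit, but the actual organizing principle is this normal form, and your stabilizer-plus-dimension-count route, as stated, does not get there.
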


An immediate corollary of this proposition is that if $\subsp$ is any closed $\Ginf$-stable subset of $\Tot = \Tot_{\p,\q,n,0}$ such that $\subsp$ contains, for all $r\in\ZZz$, an element $\x$ with $\rk(\xsym)\geq r$, $\rk(\xalt)\geq r$, and $\rk(\xcol) = n$, then we have $\subsp = \Tot$. Indeed, any polynomial that vanishes on $\Tot$ is defined over some $\Symfinone\times \Altfinone \times \Colfinone$. Since $\subsp$ contains $\x$ such that $\Ginf\x$ projects dominantly to this set, such a polynomial must vanish identically on a dense subset of $\Symfinone\times \Altfinone \times \Colfinone$, and hence must be the zero polynomial. More importantly, this proposition also implies our main theorem.

\begin{proof}[Proof of Main Theorem given Proposition~\ref{prop:highrank-denseorbit}.] We apply induction to $(\p,\q,n)$ with lexicographic ordering. When $(\p,\q,n,m) = (0,0,0,m)$, the theorem is true, as this means $\Tot = K^m$. Now, fix $(\p,\q,n) \in \ZZz^3$, and assume the theorem is true for all $(\p',\q',n',m')$ with $(\p',\q',n')$ lexicographically smaller than $(\p,\q,n)$.

Let $\subsp_0 \supseteq \subsp_1 \supseteq \ldots$ be a descending chain of $\Ginf$-stable closed subsets of $\Tot$. We want to show that there exists $\subsp \subset \Tot$ such that $\subsp_i = \subsp$ for all $i \gg 0$. For $i \in \ZZz$, let $U_i \subseteq \subsp_i = \{\x \in \subsp_i: \rk(\xsym) > i, \rk(\xalt) > i, \rk(\xcol) = n\}$. Note that the $U_i$ are open and $\Ginf$-stable. We claim that it suffices to prove that there exists $U$ such that $\overline{U_r} = U$ for all $r \gg 0$. If this is the case namely, then for $r \gg 0$, the sets $V_r = \overline{\subsp_r \setminus \overline{U_r}}$ form a descending chain of closed, $\Ginf$-stable subsets in a space that is $\Ginf$-Noetherian by Lemma~\ref{lm:boundedrkNoeth}, and hence there is $V$ such that for $i \gg r$, we have $V_i = V$. Since $\subsp_i = \overline{U_i} \cup V_i$ for all $i$, we conclude that $\subsp_i = U \cup V$ for $i \gg 0$. So indeed, it suffices to prove that there exists $U$ such that $\overline{U_r} = U$ for all $r \gg 0$.

Let $\pi,\pi'$ be the projections from $\Tot_{\p,\q,n,m}$ to $\Tot_{\p,\q,n,0}$ and $K^m$ respectively. Since $K^m$ is Noetherian, there are $D \subseteq K^m$ and $R \in \ZZz$ such that for all $r \geq R$, we have $\overline{\pi'(U_r)} = D$.

Let $f$ be a polynomial defined on $\Tot_{\p,\q,n,m}$ that vanishes on $U_R$. Then $f$ is defined on $\Symfinone\times \Altfinone \times \Colfinone\times\CC^m$ for some $l$. We write $f = \sum f_i\otimes h_i$ with all $f_i$ defined on $\Symfinone\times \Altfinone \times \Colfinone$, all $h_i$ defined on $\CC^m$, and such that the $f_i$ are linearly independent. We proceed to show that all $h_i$ vanish on $D$.

Let $r \gg 0$. Then for any $\x \in U_r$, the projection of $\Ginf \pi(\x)$ to $\Symfinone\times \Altfinone \times \Colfinone$ is dominant by Proposition~\ref{prop:highrank-denseorbit}. Note that for any $g\in\Ginf$, we have $f(g\x) = \sum f_i(g\pi(\x))h_i(\pi'(\x))$. Because the $f_i$ are linearly independent and because $f(g\x) = 0$ for all $g\in\Ginf$ and $\x \in U_r$, we conclude $h_i(\pi'(\x)) = 0$ for all $i$ and all $\x\in U_r$. As the projection of $U_r$ is dense in $D$, we conclude all $h_i$ vanish on $D$, and hence $f$ vanishes on $\Symp\times \Altq\times \Colo^n\times D$. We conclude $\overline{U_r} = \Symp\times \Altq\times \Colo^n\times D$ for all $r\gg 0$. This concludes the proof.
\end{proof}

The main work will be proving Proposition~\ref{prop:highrank-denseorbit}. To do this, we start with two lemmas.

\begin{lm}\label{lm:reducefin}Let $V,W$ be vector spaces, let $\ma \in \mathrm{Hom}(V,W)^s$, and suppose $\ma$ has rank at least $r \in \ZZz$. Then there is a finite-dimensional subspace $U \subseteq V$ such that $\ma|_{\mathrm{Hom}(U,W)^s}$ has rank at least $r$.
\end{lm}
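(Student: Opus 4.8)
The plan is to unwind the definition of the rank of a tuple. Recall that $\rk(\ma) \geq r$ means that for every nonzero $\lambda = (\lambda_1,\ldots,\lambda_s) \in \CC^s$, the single linear map $\ma_\lambda := \sum_{i=1}^s \lambda_i \ma_i$ has rank at least $r$. We want a single finite-dimensional $U \subseteq V$ that works simultaneously for all $\lambda$, i.e. such that $\rk(\ma_\lambda|_U) \geq r$ for every nonzero $\lambda$.

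First I would handle one $\lambda$ at a time. Fix a nonzero $\lambda \in \CC^s$. Since $\rk(\ma_\lambda) \geq r$, there exist vectors $v_1^{(\lambda)},\ldots,v_r^{(\lambda)} \in V$ whose images $\ma_\lambda(v_1^{(\lambda)}),\ldots,\ma_\lambda(v_r^{(\lambda)})$ are linearly independent in $W$. Let $U_\lambda = \mathrm{span}(v_1^{(\lambda)},\ldots,v_r^{(\lambda)})$, a finite-dimensional subspace of $V$ on which $\ma_\lambda$ already has rank $\geq r$. The linear independence of the $r$ images is detected by the nonvanishing of some $r\times r$ minor, which is a polynomial condition. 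The key observation is that this condition is ``Zariski open in $\lambda$'': the set $\{\mu \in \CC^s : \rk(\ma_\mu|_{U_\lambda}) \geq r\}$ is a Zariski open (hence, if nonempty, dense) subset of $\CC^s$, since $\mu \mapsto \ma_\mu|_{U_\lambda}$ is linear in $\mu$ and the condition $\rk \geq r$ is the nonvanishing of at least one maximal minor.

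Next I would use a Noetherianity/compactness argument on $\CC^s$ to pass from pointwise to uniform. For each nonzero $\lambda$, let $O_\lambda = \{\mu \in \CC^s\setminus\{0\} : \rk(\ma_\mu|_{U_\lambda}) \geq r\}$; this is a nonempty Zariski open subset of $\CC^s\setminus\{0\}$ containing $\lambda$. These open sets cover $\CC^s\setminus\{0\}$. Since $\CC^s$ is a Noetherian topological space (equivalently, by quasi-compactness of $\CC^s\setminus\{0\}$ in the Zariski topology, or simply because any open cover of an affine variety has a finite subcover), finitely many of them, say $O_{\lambda_1},\ldots,O_{\lambda_k}$, already cover $\CC^s\setminus\{0\}$. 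Now set $U = U_{\lambda_1} + \cdots + U_{\lambda_k}$, a finite-dimensional subspace of $V$. For any nonzero $\mu \in \CC^s$, we have $\mu \in O_{\lambda_j}$ for some $j$, so $\rk(\ma_\mu|_{U_{\lambda_j}}) \geq r$, and since $U_{\lambda_j} \subseteq U$ this gives $\rk(\ma_\mu|_U) \geq r$. Hence $\rk(\ma|_{\mathrm{Hom}(U,W)^s}) \geq r$, as desired.

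The main obstacle, such as it is, is the move from ``for each $\lambda$ there is a finite $U_\lambda$'' to ``there is one finite $U$ working for all $\lambda$'': a priori one might need a different $U_\lambda$ for each of the infinitely many directions $\lambda$, and since $V$ is infinite-dimensional their sum need not be finite-dimensional. The point that rescues this is that each $U_\lambda$ in fact works on a whole Zariski-open neighborhood of $\lambda$, combined with quasi-compactness of the Zariski topology on $\CC^s$. One should also note the degenerate case $s = 0$: then $\rk(\ma) = \infty \geq r$ and there is nothing to prove (take $U = 0$), and the case $r = 0$ is likewise trivial. Finally, one should check the harmless edge case where $\ma_\lambda$ has infinite rank — then one simply picks $r$ linearly independent images, which exist.
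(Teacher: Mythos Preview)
Your proof is correct and follows essentially the same idea as the paper's: both exploit Noetherianity of the parameter space of directions $\lambda$ to pass from ``for each $\lambda$ a finite-dimensional $U_\lambda$ works'' to ``one finite-dimensional $U$ works for all $\lambda$.'' The paper phrases this dually, working in $\PP^{s-1}$ with the closed sets $D_{V'} = \{[d] : \rk((\sum d_i\ma_i)|_{V'}) < r\}$ and using the descending chain condition to force $D_U = \emptyset$ for some finite-dimensional $U$, whereas you use the equivalent quasi-compactness formulation via open covers $O_\lambda$ and a finite subcover; the content is the same.
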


\begin{proof} If $V'$ is a finite-dimensional subspace of $V$, let $\pi_{V'}$ be the restriction $\mathrm{Hom}(V,W)^s \to \mathrm{Hom}(V',W)^s$, and let $D_{V'}= \{(d_1:\ldots:d_s)\in\PP^{s-1}: \rk(\pi_{V'}(\sum d_i\ma_i))<r\}$. Note that each $D_{V'}$ is closed. Suppose that $(d_1:\ldots:d_s) \in D_{V'}$. Then there is a $V' \subseteq V'' \subseteq V$ with $V''$ of finite dimension such that $(d_1:\ldots:d_s) \not\in D_{V''}$. In particular, this implies $D_{V'} \supsetneq D_{V''}$. Using Noetherianity of $\PP^{s-1}$, we conclude that there is a finite-dimensional subspace $U$ of $V$ such that $D_{U} = \emptyset$ (if not, we would be able to construct a strictly decreasing chain of closed subsets of $\PP^{s-1}$ of infinite length), which means $\pi_{U}(\ma)$ has rank at least $r$.
\end{proof}

\begin{lm}\label{lm:freevector}Let $V,W$ be vector spaces, let $\ma \in \mathrm{Hom}(V,W)^s$, and suppose $\ma$ has rank at least $s$. Then there is $v\in V$ such that $\ma v = \langle\ma_1v,\ldots,\ma_sv\rangle \subseteq W$ has dimension $s$.
\end{lm}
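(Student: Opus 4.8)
The plan is to reduce to the case where $V$ and $W$ are finite-dimensional and then conclude by a dimension count on an incidence variety. If $s = 0$ the statement is vacuous (take any $v \in V$), so assume $s \geq 1$. First I would invoke Lemma~\ref{lm:reducefin} with $r = s$ to replace $V$ by a finite-dimensional subspace on which $\ma$ still has rank at least $s$, and then replace $W$ by $\ma_1(V)+\cdots+\ma_s(V)$, which is finite-dimensional and contains every $\ma_i v$; neither replacement changes the rank of $\ma$ or the conclusion we are after. Note that taking $\lambda = e_i$ in the definition of rank gives $\rk(\ma_i)\geq s$, so in particular $\dim V \geq s$.

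Next I would introduce the incidence variety
\[
Z = \bigl\{(v,[\lambda]) \in V\times\PP^{s-1} : \lambda_1\ma_1(v) + \cdots + \lambda_s\ma_s(v) = 0\bigr\},
\]
which is closed in $V\times\PP^{s-1}$, being cut out by finitely many bilinear equations (one for each coordinate of the finite-dimensional space $W$). The point of the construction is that a vector $v$ fails to have $\ma_1 v,\ldots,\ma_s v$ linearly independent precisely when $(v,[\lambda])\in Z$ for some $[\lambda]$; so it suffices to show the projection $Z\to V$ is not dominant, and then any $v$ outside its image works.

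For that I would estimate $\dim Z$ via the other projection $Z\to\PP^{s-1}$: the fiber over $[\lambda]$ is $\ker(\lambda_1\ma_1+\cdots+\lambda_s\ma_s)$, which has dimension $\dim V - \rk(\lambda_1\ma_1+\cdots+\lambda_s\ma_s)\leq \dim V - s$ since $\ma$ has rank at least $s$. The theorem on the dimension of fibres then yields $\dim Z \leq (s-1) + (\dim V - s) = \dim V - 1$. Hence the image of $Z$ in $V$ is a constructible set of dimension at most $\dim V - 1$, so it is a proper subset of the irreducible variety $V$; any $v$ outside it satisfies $\lambda_1\ma_1 v+\cdots+\lambda_s\ma_s v\neq 0$ for every nonzero $\lambda\in\CC^s$, which is exactly the assertion that $\langle\ma_1 v,\ldots,\ma_s v\rangle$ is $s$-dimensional.

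The one step requiring care is the fibre-dimension estimate when $Z$ is reducible: for each irreducible component $Z'$, the closure of its image in $\PP^{s-1}$ is irreducible of some dimension $d\leq s-1$, its general fibre over that image has dimension $\dim Z' - d$ and sits inside some $\ker(\lambda_1\ma_1+\cdots+\lambda_s\ma_s)$, forcing $\dim Z' - d\leq \dim V - s$ and hence $\dim Z'\leq \dim V - 1$. I expect this bookkeeping to be the only genuine obstacle; the reduction step and the final extraction of $v$ are routine. (An alternative, slightly more hands-on route would be induction on $s$: pick $v_0$ making $\ma_1 v_0,\ldots,\ma_{s-1}v_0$ independent, write $\ma_s v_0$ in their span if it lies there, note the residual combination of the $\ma_i$ again has rank at least $s$, and perturb $v_0$; but the incidence-variety argument avoids the case analysis.)
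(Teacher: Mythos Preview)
Your argument is correct and is essentially the same as the paper's: reduce to finite-dimensional $V$ via Lemma~\ref{lm:reducefin}, form the incidence variety $Z\subseteq V\times\PP^{s-1}$, bound $\dim Z\leq\dim V-1$ by looking at the fibres over $\PP^{s-1}$, and conclude that the projection to $V$ is not surjective. Your treatment is in fact more careful than the paper's (you make $W$ finite-dimensional and handle the reducibility of $Z$ explicitly), but the strategy is identical.
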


\begin{proof} By the previous lemma, we may assume $V$ is finite-dimensional without loss of generality. We let $\subsp := \{(v,d) \in V\times\PP^{s-1}:\sum d_i\ma_i v = 0\}$. For each $d \in \PP^{s-1}$, the codimension of the fiber above $d$ in $\subsp$ is at least $s$ since $\rk(\ma) \geq s$, and hence the codimension of $\subsp$ in $V\times\PP^{s-1}$ is at least $s$. This implies that the projection of $\subsp$ to $V$ has dimension at most $\mathrm{dim}(V)-1$, and hence there is $v\in V$ such that $\sum d_i\ma_i v \neq 0$ for any $d\in\PP^{s-1}$. This means $\ma v$ has dimension $s$, as was to be shown.
\end{proof}

An immediate corollary is the following.

\begin{cor}\label{cor:freeimage}Let $\ma \in \mathrm{Hom}(V,W)^s$, let $l \in \ZZz$, and suppose $\ma$ has rank at least $ls$. Then there is $V'\subseteq V$ of dimension $l$ such that $\ma V' = \ma_1V'+\ldots+\ma_sV'$ has dimension $ls$.
\end{cor}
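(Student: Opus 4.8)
The plan is to induct on $l$. The base case $l=0$ is immediate: take $V' = 0$, so that $\ma V' = 0$ has dimension $0 = ls$. (When $s = 0$ the statement is only meaningful if $\dim V \ge l$, in which case any $l$-dimensional subspace works since $\ma V' = 0$; so for the rest I assume $s \ge 1$, which forces $\dim V \ge \rk(\ma) \ge ls$ and leaves enough room.)

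For the inductive step, suppose $l \ge 1$ and the corollary holds for $l-1$. Since $\ma$ has rank at least $ls \ge (l-1)s$, the induction hypothesis provides a subspace $V'' \subseteq V$ with $\dim V'' = l-1$ and $\dim \ma V'' = (l-1)s$. It then suffices to find a vector $v \in V$ such that $\ma_1 v, \dots, \ma_s v$ are linearly independent modulo $\ma V''$: setting $V' := V'' + \langle v \rangle$, such independence forces $v \notin V''$, so $\dim V' = l$, and since $\ma V'' \subseteq \ma V'$ maps to $0$ in $W/\ma V''$ while $\ma V'$ surjects onto the span of the $\ma_i v$, we get $\dim \ma V' = \dim \ma V'' + s = ls$.

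To produce $v$, I would pass to the quotient $\bar W := W/\ma V''$ and let $\bar\ma = (\bar\ma_1, \dots, \bar\ma_s) \in \mathrm{Hom}(V, \bar W)^s$, where $\bar\ma_i$ is $\ma_i$ followed by the quotient map $W \to \bar W$. For any nonzero $\lambda \in \CC^s$, the map $\sum_i \lambda_i \bar\ma_i$ is $\sum_i \lambda_i \ma_i$ composed with a surjection whose kernel $\ma V''$ has dimension $(l-1)s$, so $\rk(\sum_i \lambda_i \bar\ma_i) \ge \rk(\sum_i \lambda_i \ma_i) - (l-1)s \ge ls - (l-1)s = s$. Hence $\rk(\bar\ma) \ge s$, and Lemma~\ref{lm:freevector} supplies $v \in V$ with $\dim \langle \bar\ma_1 v, \dots, \bar\ma_s v\rangle = s$ — precisely the asserted independence of $\ma_1 v, \dots, \ma_s v$ modulo $\ma V''$.

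The remaining verifications ($\dim V' = l$ and $\dim \ma V' = ls$) are routine dimension bookkeeping. The only step deserving genuine attention is the rank estimate above: it is exactly the fact that passing to $W/\ma V''$ lowers the rank of each $\sum_i \lambda_i \ma_i$ by at most $\dim \ma V'' = (l-1)s$ that lets the induction close with no loss, so the hypothesis "rank at least $ls$" gets used sharply.
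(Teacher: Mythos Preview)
Your proof is correct and follows essentially the same inductive strategy as the paper: reduce to Lemma~\ref{lm:freevector} by quotienting out $\ma V''$ and noting that this drops the rank by at most $(l-1)s$. The only cosmetic difference is that the paper also quotients $V$ by $V''$ (observing this costs nothing further since $V''$ already maps to zero in $W/\ma V''$), whereas you work directly in $\mathrm{Hom}(V,\bar W)^s$; your explicit check that $v\notin V''$ and your handling of the $s=0$ edge case are small improvements in rigor over the paper's presentation.
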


\begin{proof} The corollary is clearly true for $l=0$, and the case $l=1$ is Lemma~\ref{lm:freevector}. Assume inductively that there is $V''\subseteq\Cinfw$ of dimension $l-1$ such that $\ma V''$ has dimension $(l-1)s$. Let $\overline{\ma}$ be the projection of $\ma$ to $\mathrm{Hom}(V/V'',W/\ma V'')^s$, and observe that $\overline{\ma}$ has rank at least $s$, since modding out $\ma V''$ reduces the rank of $\ma$ by $(l-1)s$, and since $V''$ simply maps to $0$ afterwards, modding out $V''$ does not reduce the rank any further. Now by Lemma~\ref{lm:freevector}, there is $v+V'' \in V/V''$ such that $\overline{\ma}\overline{v}$ has dimension $s$. Then clearly $V' = \langle v,V''\rangle$ is a $l$-dimensional subspace of $V$ such that $\ma V'$ has dimension $ls$, as was to be shown.
\end{proof}

Intuitively, the previous corollary allows us to play with the respective images of the $\ma_i$ without interfering with the other images. However, since $\Ginf$ acts on $\Mats$ by $g\cdot \ma_i = g\ma_i g^T$ rather than $g\cdot \ma_i = g\ma_i g^{-1}$, we are not able to freely choose bases, even if we restrict ourselves to $\GL_N$ acting on $\MatfinoneN$ for some $N$. We do have some freedom though, as shown in the following lemma.

\begin{lm}\label{lm:optionsG} Let $\ma \in \SymfinoneN \times \AltfinoneN$, let $s=\p+\q$, and suppose there is $V\subseteq \CC^N$ of dimension $2^sl$ such that $\ma V$ has dimension $s2^sl$. Then there is $g\in \GL_N$ such that $g\cdot\ma$ is of the form \[\left(
\begin{bmatrix}
0_l & *_{l,N-l} \\
\mathrm{Id}_l & *_{l,N-l} \\
0_l & *_{l,N-l} \\
\vdots & \vdots \\
0_l & *_{l,N-l} \\
0_{N-(s+1)l,l} & *_{N-(s+1)l,N-l}
\end{bmatrix},
\begin{bmatrix}
0_l & * \\
0_l & * \\
\mathrm{Id}_l & * \\
\vdots & \vdots \\
0_l & * \\
0_{N-(s+1)l,l} & *\end{bmatrix},
\ldots,
\begin{bmatrix}
0_l & * \\
0_l & * \\
\vdots & \vdots \\
\mathrm{Id}_l & * \\
0_{N-(s+1)l,l} & *\end{bmatrix}
\right).
\] Here, $*_{i,j}$ indicates an $i\times j$ matrix, $0_l$ indicates the $l\times l$ zero matrix, and $\mathrm{Id}_l$ indicates the $l\times l$ identity matrix.
\end{lm}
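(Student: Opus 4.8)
The plan is to use the hypothesis on $V$ to produce, block by block, the desired normal form by conjugating with a single $g \in \GL_N$. First I would pick, for each $i \in \{1, \ldots, s\}$, a vector $v_i \in V$; the condition $\dim(\ma V) = s2^s l$ is far stronger than needed for one vector, and Corollary~\ref{cor:freeimage} (applied with the $2^s l$-dimensional space $V$ and rank at least $s 2^s l$ on $V$) guarantees a $2^s l$-dimensional subspace $V' \subseteq V$ on which the images $\ma_1 V', \ldots, \ma_s V'$ are independent and together span an $s 2^s l$-dimensional space. Restricting attention to $V'$, I would like to choose an $l$-dimensional subspace $W \subseteq V'$ so that the $2s$ subspaces $W$, $\ma_1 W, \ldots, \ma_s W$ — and more importantly the full list of $2^s$-many subspaces one gets from the various sums — are in sufficiently general position; this is exactly where the factor $2^s$ in the hypothesis is being spent. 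Concretely, $g$ should send a fixed coordinate $l$-block to $W$, and the $i$-th coordinate block (the one carrying $\mathrm{Id}_l$ in the $i$-th matrix) to a complement inside $\ma_i W$ of the contribution coming from the symmetric/skew pairing with $W$ itself.

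The key subtlety is that $g$ acts by $\ma \mapsto g \ma g^T$, not by $g \ma g^{-1}$, so the row space and the column space of each $(g\ma_i)$ are tied to the \emph{same} choice of $g$. So after fixing $g$ on the first $l$ columns (i.e. fixing $g e_1, \ldots, g e_l$ to be a basis of $W$), the entry block in rows $(i-1)l+1,\ldots,il$ and columns $1,\ldots,l$ of $g \ma_j g^T$ is $\langle g e_{(i-1)l+\cdot}, \ma_j g e_{\cdot}\rangle$, a pairing between the $i$-th block of $g$ and $\ma_j W$. The strategy is therefore: (1) choose the first $l$ columns of $g$ to be a basis of $W \subseteq V'$; (2) for $i = 1, \ldots, s$ in turn, choose the $i$-th block of $l$ columns of $g$ to lie in $\ma_i W$ and be dual, under the form $\langle \cdot, \ma_i(\cdot)\rangle|_{W}$, to the chosen basis of $W$, while simultaneously lying in the kernel of $\langle \cdot, \ma_j(\cdot)\rangle$ for $j \ne i$ and of $\langle \cdot, \ma_i(\cdot)\rangle$ restricted appropriately; (3) choose the remaining $N - (s+1)l$ columns of $g$ arbitrarily to complete it to an invertible matrix, which is possible once the first $(s+1)l$ columns are linearly independent. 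The "$*$" blocks are then whatever they turn out to be, and impose no constraint.

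The main obstacle is step (2): showing that the linear conditions "$g$'s $i$-th block pairs as $\mathrm{Id}_l$ with $W$ under $\ma_i$ and as $0$ with $W$ under $\ma_j$ for $j \ne i$, and the blocks are jointly linearly independent" are simultaneously satisfiable. This is a counting/genericity argument: each pairing condition cuts the ambient space by a codimension governed by $l^2$ or $l(l\pm 1)/2$, and after $s$ steps one has used up a space of dimension on the order of $s 2^s l$ — which is precisely why the hypothesis asks for $\dim(\ma V) = s 2^s l$ rather than merely $sl$. I would organize this as an induction on $s$ (or on $i$), modding out at each stage by the span of the blocks already chosen and by the relevant pieces of $\ma W$, and invoking Lemma~\ref{lm:freevector} or Corollary~\ref{cor:freeimage} on the quotient exactly as in the proof of Corollary~\ref{cor:freeimage}; at each step the "remaining" rank is still at least $s$ times the leftover dimension, so a suitable free vector (hence a suitable $l$-block) exists. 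The skew-symmetric factors require minor care because $\langle v, \ma_i v\rangle = 0$ automatically when $\ma_i$ is alternating, but since the blocks $\mathrm{Id}_l$ are off-diagonal (block row $i \ne$ block column $1$) this self-pairing never interferes with the required entries, and the bookkeeping goes through with $2^s$ comfortably absorbing the difference between symmetric and alternating constraints.
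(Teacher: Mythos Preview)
Your plan has a genuine gap: you never secure the requirement that the $(1,1)$ block of each $g\ma_i g^T$ be $0_l$. In terms of your subspace $W$ (spanned by the first $l$ \emph{rows} of $g$---note the action is $\ma\mapsto g\ma g^T$, so it is rows, not columns, of $g$ that play the role of the new basis), this condition says precisely that $W$ is simultaneously isotropic for all $s$ bilinear forms $\ma_1,\ldots,\ma_s$. Isotropy is a \emph{special} condition, not a generic one: a general $l$-plane will fail to be isotropic for a nondegenerate symmetric $\ma_i$. Neither ``sufficiently general position'' nor Lemma~\ref{lm:freevector}/Corollary~\ref{cor:freeimage} says anything about isotropy; those results only control $\dim\ma W$, and in fact $\dim\ma W=s\dim W$ is automatic for \emph{every} $W\subseteq V$ once $\dim\ma V=s\dim V$, so no appeal to them is needed for that part. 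Your closing remark that $\langle v,\ma_i v\rangle=0$ for alternating $\ma_i$ addresses only the diagonal entries of the $(1,1)$ block in the skew case and does nothing for the symmetric factors. (A secondary issue: asking that the $(j{+}1)$-th row block of $g$ lie \emph{inside} $\ma_j W$ is an unnecessary and generally unsatisfiable constraint; one only needs the duality relations $U_j^{T}\ma_i W=\delta_{ij}\mathrm{Id}_l$.)

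The paper obtains the simultaneously isotropic $W$ by a different mechanism, and this is where the factor $2^s$ is actually consumed. After moving $V$ to $\langle e_1,\ldots,e_{2^sl}\rangle$, it uses the fact that over $\CC$ any two symmetric (respectively skew-symmetric) $k\times k$ matrices of equal rank lie in the same $\GL_k$-orbit under $A\mapsto hAh^T$, and that every rank admits a representative with vanishing top-left $(k/2)\times(k/2)$ block. Applying this with $k=2^sl$ to $\ma_s$ produces $h\in\GL_{2^sl}$ making the top-left $2^{s-1}l\times 2^{s-1}l$ block of $\ma_s$ zero; since any $h'\in\GL_{2^{s-1}l}$ preserves that zero block, one then recurses on $\ma_1,\ldots,\ma_{s-1}$ inside the smaller block. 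After $s$ halvings, $W=\langle e_1,\ldots,e_l\rangle$ is isotropic for every $\ma_i$, and the remaining step---row operations on rows $l{+}1,\ldots,N$ via $\GL_{\langle e_{l+1},\ldots,e_N\rangle}$ to put the first $l$ columns into the target shape---is straightforward and matches the workable part of your outline.
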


\begin{proof} We work in steps. First of all, we may assume $V = \langle e_1,\ldots,e_{2^sl}\rangle$ by applying some $h \in \GL_N$ such that $h^T \langle e_1,\ldots,e_{2^sl}\rangle = V$. The first $2^sl\times 2^sl$ blocks of $\ma$ are of the form $\ma'_i$ with $\ma'_i$ either symmetric or skew-symmetric.
If we now apply $h \in \GL_{2^sl} = \GL_V$, the space $(h\cdot\ma)V$ simply equals $h(\ma V)$, and therefore still has dimension $s2^sl$. With regards to the first $2^sl\times2^sl$ blocks of $\ma$, it acts by $(h\cdot \ma)'_i = g\ma'_ig^T$. Note that for all $r \in \{0,\ldots,2^sl\}$, there is a symmetric (respectively skew-symmetric) $2^sl\times 2^sl$ matrix of rank $r$ with the first $2^{s-1}l\times 2^{s-1}l$ block equal to $0$. Since any two symmetric (respectively skew-symmetric) matrices of the same rank lie in the same orbit under $\GL_V$, after applying some $h$, we may assume the first $2^{s-1}l\times 2^{s-1}l$ block of $\ma_s$ is $0$. Since any $h \in \GL_{2^{s-1}l}$ fixes this block, we can apply induction to $s$, and in doing so, we may assume that the first $l\times l$ block of each $\ma_i$ is $0_l$.

We have now reduced the problem to the case where each $\ma_i$ is of the form $\begin{pmatrix}0_l & *_{l,N-l}\\ *_{N-l,l} & *_{N-l,N-l}\end{pmatrix}$. Moreover, since $W = \langle e_1,\ldots,e_l\rangle \subseteq V$, we find $\ma W$ has dimension $sl$, which means the first $l$ columns of the $s$ matrices are linearly independent.

Let $W' = \langle e_{l+1},\ldots,e_N\rangle$. We apply $h \in \GL_{W'}$. Note that we have $h\ma_i = \\ \begin{pmatrix}\mathrm{Id}_l & 0_{l,N-l}\\ 0_{N-l,l} & h_{N-l,N-l}\end{pmatrix}\begin{pmatrix}0_l & *_{l,N-l}\\ *_{N-l,l} & *_{N-l,N-l}\end{pmatrix}\begin{pmatrix}\mathrm{Id}_l & 0_{l,N-l}\\ 0_{N-l,l} & h_{N-l,N-l}^T\end{pmatrix}$, and we observe that this changes the first $l$ columns of $\ma_i$ from $\begin{pmatrix}0_l\\ *_{N-l,l}\end{pmatrix}$ to $\begin{pmatrix}0_l\\ h_{N-l,N-l}*_{N-l,l}\end{pmatrix}$. In other words, we can apply arbitrary row operations to the first $l$ columns of $\ma_i$ (as long as we apply them to all $\ma_i$ simultaneously). Since the first $l$ columns of the $s$ matrices are all linearly independent, it is now easily verified that we may indeed assume that $\ma$ has the desired form after applying some $g \in \GL_N$.
\end{proof}

\begin{cor}\label{cor:closureG} Let $\ma \in \SymfinoneN \times \AltfinoneN$, let $s=\p+\q$, and suppose there is $V\subseteq \CC^N$ of dimension $2^sl$ such that $\ma V$ has dimension $s2^sl$. Then the closure of $\GL_N \ma$ contains \[\left(
\begin{bmatrix}
0_l & *_{l,N-l} \\
\mathrm{Id}_l & 0_{l,N-l} \\
0_l & 0_{l,N-l} \\
\vdots & \vdots \\
0_l & 0_{l,N-l} \\
0_{N-(s+1)l,l} & 0_{N-(s+1)l,N-l}
\end{bmatrix},
\begin{bmatrix}
0_l & * \\
0_l & 0 \\
\mathrm{Id}_l & 0 \\
\vdots & \vdots \\
0_l & 0 \\
0_{N-(s+1)l,l} & 0\end{bmatrix},
\ldots,
\begin{bmatrix}
0_l & * \\
0_l & 0 \\
\vdots & \vdots \\
\mathrm{Id}_l & 0 \\
0_{N-(s+1)l,l} & 0\end{bmatrix}
\right).
\]
\end{cor}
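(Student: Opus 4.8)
The plan is to deduce the corollary from Lemma~\ref{lm:optionsG} by a single one-parameter limit. First I would note that replacing $\ma$ by $g\cdot\ma$ for any $g\in\GL_N$ leaves $\overline{\GL_N \ma}$ unchanged, so by Lemma~\ref{lm:optionsG} we may assume $\ma$ is already in the block form displayed there; in particular, the upper-left $l\times l$ block of each $\ma_i$ vanishes, the first $l$ columns of $\ma_i$ consist of the block $\mathrm{Id}_l$ in block row $i+1$ and zeroes elsewhere, and the entries of $\ma_i$ in its last $N-l$ columns are unconstrained apart from the symmetry or skew-symmetry of $\ma_i$.

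Next I would introduce, for $u\in\CC^{\times}$, the diagonal matrix
\[
 h_u=\mathrm{diag}(\underbrace{u^{-1},\ldots,u^{-1}}_{l},\underbrace{u,\ldots,u}_{N-l})\in\GL_N ,
\]
and observe that an entry of $\ma_i$ in position $(a,b)$ gets multiplied, in $h_u\cdot\ma_i$, by $u^{-2}$ if $a,b\le l$, by $1$ if exactly one of $a,b$ is $\le l$, and by $u^{2}$ if $a,b>l$. Because the upper-left $l\times l$ block of each $\ma_i$ is zero, no negative powers of $u$ actually occur, so $u\mapsto h_u\cdot\ma$ is a morphism $\mathbb{A}^1\to\SymfinoneN\times\AltfinoneN$ whose restriction to $\mathbb{A}^1\setminus\{0\}$ lands in $\GL_N \ma$; hence $\lim_{u\to0}h_u\cdot\ma$ lies in $\overline{\GL_N \ma}$.

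Finally I would read off this limit: in each $\ma_i$ the first $l$ rows and the first $l$ columns are left unchanged, while every entry lying simultaneously in the last $N-l$ rows and the last $N-l$ columns is sent to $0$. Comparing with the form provided by Lemma~\ref{lm:optionsG}, what remains is exactly the tuple displayed in the statement (the block written $*_{l,N-l}$ being the last $N-l$ columns of the first $l$ rows carried over from that lemma), so it lies in $\overline{\GL_N \ma}$, as desired.

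I do not anticipate a serious obstacle. The only point needing care is that the limit genuinely exists, i.e.\ that the coefficients of all $u^{-2}$-terms vanish, and this is precisely guaranteed by the normalization (vanishing upper-left $l\times l$ block) obtained from Lemma~\ref{lm:optionsG}. The remaining ingredient — that the value at $0$ of a morphism from $\mathbb{A}^1$ to an affine space whose restriction to $\mathbb{A}^1\setminus\{0\}$ lands in a subset $S$ lies in $\overline{S}$ — is standard.
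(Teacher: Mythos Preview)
Your proof is correct and follows essentially the same approach as the paper: after normalizing via Lemma~\ref{lm:optionsG}, the paper acts by the same one-parameter subgroup $\mathrm{diag}(\lambda^{-1}\mathrm{Id}_l,\lambda\mathrm{Id}_{N-l})$ and lets $\lambda\to0$. Your write-up is slightly more explicit about why the limit exists (the vanishing of the upper-left $l\times l$ block kills the $u^{-2}$ terms), but the argument is the same.
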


\begin{proof} By Lemma~\ref{lm:optionsG}, we may assume the first $l$ rows and columns of each $\ma_i$ have the desired form. Multiply $\ma$ with $\begin{bmatrix} \frac{1}{\lambda} \mathrm{Id}_l & 0\\0 & \lambda\mathrm{Id}_{N-l}\end{bmatrix}$ for $\lambda \neq 0$. It maps $\begin{bmatrix} 0_l & B_{l,N-l}\\C_{N-l,l} & D_{N-l,N-l}\end{bmatrix}$ to $\begin{bmatrix} 0_l & B_{l,N-l}\\C_{N-l,l} & \lambda^2D_{N-l,N-l}\end{bmatrix}$. Now let $\lambda$ go to $0$.
\end{proof}

We are now at the point where we can prove Proposition~\ref{prop:highrank-denseorbit}. Since the proof becomes rather technical, we first prove a weaker version of the proposition. We only prove this lemma in order to get a feeling for what is happening. We will not use it in the actual proof of Proposition~\ref{prop:highrank-denseorbit}.

\begin{lm}\label{lm:highrank-denseorbit} Let $l \in \ZZz$. Then for any $\ma \in \Symp$ such that $\rk(\ma) \geq \p2^{\p}l$, the projection of $\Ginf \ma$ to $\Symfinone$ is dominant.
\end{lm}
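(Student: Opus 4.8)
The plan is to reduce the problem to the finite-dimensional statement about $\GL_N$ acting on $\SymfinoneN$ and then invoke Corollary~\ref{cor:closureG}. First I would fix $l$ and take $\ma \in \Symp$ with $\rk(\ma) \geq \p 2^{\p} l$. Applying Corollary~\ref{cor:freeimage} (with $s = \p$, and the ``$l$'' there taken to be $2^{\p}l$, so that $ls = \p 2^{\p} l$), there is a subspace $V \subseteq \Cinfw$ of dimension $2^{\p}l$ with $\ma V = \ma_1 V + \ldots + \ma_{\p} V$ of dimension $\p 2^{\p} l$. Since $V$ and $\ma V$ are both finite-dimensional, I can choose $N$ large enough that $V \subseteq \CC^N$ and $\ma V \subseteq \CC^N$; then the first $N \times N$ blocks of $\ma$ already record all of $V$ and $\ma V$, so the hypothesis of Corollary~\ref{cor:closureG} is satisfied by the truncation $\ma^{(N)} \in \SymfinoneN$ (with $\q = 0$, so $s = \p$).

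Next I would use Corollary~\ref{cor:closureG}: the closure of $\GL_N \ma^{(N)}$ inside $\SymfinoneN$ contains the specific ``normal form'' tuple $\ma^{\star}$ displayed there (with all $*$-blocks set to $0$ except the $l \times (N-l)$ block in the top-right of each matrix, which remains free). The key observation is that $\ma^{\star}$, restricted further to its first $l \times l$ block, is simply the zero tuple, but more usefully the orbit closure of $\ma^{(N)}$ under $\GL_N$ — hence certainly under $\Ginf$ acting on all of $\Symp$ — surjects onto a set of tuples whose first $l$ rows and columns can be made to take any prescribed symmetric values. Concretely: starting from $\ma^{\star}$ and acting by block-lower-triangular elements of $\GL_N$ of the form $\begin{bmatrix}\mathrm{Id}_l & 0 \\ B & \mathrm{Id}_{N-l}\end{bmatrix}$ and their transposes, one can fill in arbitrary entries in the first $l$ rows/columns of each of the $\p$ matrices, because the $\mathrm{Id}_l$ blocks sitting in rows $l+1,\ldots,(\p+1)l$ act as ``pivots'' that propagate chosen row operations into the first $l$ columns (this is exactly the mechanism exploited in the proof of Lemma~\ref{lm:optionsG}). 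Thus the projection of $\Ginf \ma$ to $\Symfinone$ hits a constructible set whose closure is all of $\Symfinone$, i.e. the projection is dominant.

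More carefully, the cleanest way to organize the last step is: let $\pi_l \colon \Symp \to \Symfinone$ be the projection onto the first $l \times l$ blocks; it is $\GL_l$-equivariant and, restricted to $\GL_N$-orbits, factors through the $\GL_N$-action. Because orbit closure is preserved under continuous maps, $\pi_l(\overline{\GL_N \ma^{(N)}}) \subseteq \overline{\pi_l(\GL_N \ma^{(N)})} = \overline{\pi_l(\GL_N \ma)} \subseteq \overline{\pi_l(\Ginf \ma)}$, so it suffices to show $\pi_l$ applied to the orbit of the normal-form tuple $\ma^{\star}$ from Corollary~\ref{cor:closureG} is dense in $\Symfinone$. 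For this I would exhibit, for an arbitrary target $(\mu_1,\ldots,\mu_{\p}) \in \Symfinone$, an explicit element of $\GL_N$ (again block-unipotent, using the $\mathrm{Id}_l$ pivot blocks) carrying $\ma^{\star}$ to a tuple whose first $l \times l$ blocks are exactly $(\mu_1,\ldots,\mu_{\p})$ — a direct matrix computation — and conclude $\pi_l(\GL_N \ma^{\star}) = \Symfinone$ on the nose, which is more than dominance.

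The main obstacle I expect is the final explicit-matrix step: verifying that the free $l \times (N-l)$ blocks together with the $\mathrm{Id}_l$ pivots really do give enough room to realize \emph{every} symmetric $l \times l$ matrix in each of the $\p$ slots simultaneously, while keeping track of how a single $g = \begin{bmatrix}\mathrm{Id}_l & 0 \\ B & \mathrm{Id}_{N-l}\end{bmatrix}$ (applied as $g \cdot \ma_i = g \ma_i g^T$) acts on all slots at once. The point is that the $i$-th pivot block $\mathrm{Id}_l$ lives in a different row-band for each $i$, so choosing $B$ with independent blocks in those bands lets one adjust slot $i$ independently of slot $j$; one must check the symmetrization from the $g^T$ on the right does not couple the slots in an obstructing way, and that the number $\p 2^{\p} l$ — which is what forced $V$ to have dimension $2^{\p}l$ rather than just $(\p+1)l$ — is exactly what Corollary~\ref{cor:closureG} needs. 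Everything else is bookkeeping about truncation levels $N$ and the compatibility of $\Ginf$-orbit closures with coordinate projections.
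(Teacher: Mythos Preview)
Your outline is the paper's argument: apply Corollary~\ref{cor:freeimage} to get $V$ of dimension $2^{\p}l$ with $\ma V$ of dimension $\p 2^{\p} l$, pass to a finite truncation, use Corollary~\ref{cor:closureG} to replace $\ma$ by the normal form $\ma^{\star}$ in the orbit closure, and then act by a single block-unipotent $g$ to realise an arbitrary symmetric target in the top $l\times l$ block of each slot.

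One correction to the explicit step you flagged as the obstacle: the block-\emph{lower}-triangular $g=\begin{bmatrix}\mathrm{Id}_l & 0\\ B & \mathrm{Id}\end{bmatrix}$ you wrote down does \emph{not} move the top-left $l\times l$ block. Writing $\ma^{\star}_i=\begin{bmatrix}0 & C_i\\ C_i^T & 0\end{bmatrix}$ with $C_i=[0,\ldots,\mathrm{Id}_l,\ldots,0]$, one computes $(g\ma^{\star}_ig^T)^{1,1}=0$. You need the transpose, i.e.\ block-\emph{upper}-triangular, and the paper writes it down explicitly: with
\[
g=\mathrm{Id}+\begin{pmatrix}0_l & \tfrac12\ma'_1 & \tfrac12\ma'_2 & \cdots & \tfrac12\ma'_{\p}\\ & & 0_{\p l,(\p+1)l} & & \end{pmatrix}
\]
one gets $(g\ma^{\star}_ig^T)^{1,1}=\tfrac12\ma'_i+\tfrac12(\ma'_i)^T=\ma'_i$, which is exactly the surjectivity onto $\Symfinone$ you want. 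The mechanism you cite from Lemma~\ref{lm:optionsG} (row operations on rows $l+1,\ldots,N$) is a different one --- it normalises the first $l$ columns \emph{below} the top block, not the top block itself.
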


\begin{proof} Let $\ma \in \Symp$ such that $\rk(\ma)\geq 2^{\p}l\p$. By Corollary~\ref{cor:freeimage}, there is $V' \subseteq \Cinfw$ of dimension $2^{\p}l$ such that $\ma V'$ has dimension $\p2^{\p}l$. Note that it suffices to show that the $\Ginf$-orbit of something in the closure of $\Ginf \ma$ projects dominantly to $\Symfinone$.

By Corollary~\ref{cor:closureG}, we may assume that (after projecting down to some $\CC^N$ with $N$ sufficiently large and taking an element in the closure), we have \[\ma = \left(
\begin{bmatrix}
0_l & *_{l,N-l} \\
\mathrm{Id}_l & 0_{l,N-l} \\
0_l & 0_{l,N-l} \\
\vdots & \vdots \\
0_l & 0_{l,N-l} \\
0_{N-(\p+1)l,l} & 0_{N-(\p+1)l,N-l}
\end{bmatrix},
\begin{bmatrix}
0_l & * \\
0_l & 0 \\
\mathrm{Id}_l & 0 \\
\vdots & \vdots \\
0_l & 0 \\
0_{N-(\p+1)l,l} & 0\end{bmatrix},
\ldots,
\begin{bmatrix}
0_l & * \\
0_l & 0 \\
0_l & 0 \\
\vdots & \vdots \\
\mathrm{Id}_l & 0 \\
0_{N-(\p+1)l,l} & 0\end{bmatrix}
\right).\]

We project down to $\CC^{(\p+1)l}$, and restrict ourselves to the action of $\GL_{(\p+1)l}$. We can now write $\ma_i = (\ma_i^{j,k})_{j,k=1}^{\p+1}$ where each $\ma_i^{j,k}$ is an $l\times l$ matrix. By the choices we made, we have $\ma_i^{1,1} = 0$ for all $i$, and we have $\ma_i^{j+1,1} = \delta_{i,j}\cdot \mathrm{Id}_l$ for all $i,j \in \{1,\ldots,\p\}$. Finally, we have $\ma_i^{j,k} = 0$ for $j,k \geq 2$. Since all $\ma_i$ are symmetric, we also have $\ma_i^{1,j+1} = \delta_{i,j}\cdot \mathrm{Id}_l$ for all $i,j$.

Let $\ma'_1,\ldots,\ma'_{\p} \in \Symfinone$.  Let \[g = \mathrm{Id}+ \begin{pmatrix}0_l & \frac{1}{2}\ma'_1 & \frac{1}{2}\ma'_2 & \ldots & \frac{1}{2}\ma'_{\p}\\ \\ & & 0_{\p l,(\p+1)l} \end{pmatrix}.\]

By direct computation, we find that the first $l\times l$ block of $g\ma_ig^T$ is $\frac{1}{2}\ma'_i+\frac{1}{2}(\ma'_i)^T$, which equals $\ma'_i$ because $\ma'_i$ is symmetric. We conclude that $\ma'$ lies in the closure of the projection of $\GL_{(\p+1)l}\ma$ to $\Symfinone$. This concludes the proof.
\end{proof}

An analogous proof can be used for $\Altq$, and for $\Symp\times\Altq$. We now prove our main proposition.

\begin{proof}[Proof of Proposition~\ref{prop:highrank-denseorbit}]
Write $s = \p+\q$, and let $r = s2^sl+2(s+1)n$. Let $\x = (\xsym,\xalt,\xcol) \in \Symp\times\Altq\times\Colo^n$ with $\rk(\xsym),\rk(\xalt) \geq 2r$, and $\rk(\xcol) = n$. Observe that $\xma = (\xsym,\xalt)$ has rank at least $r$. If not namely, let $\lambda_i \in \CC$ such that $M = \sum \lambda_i (\xma)_i$ has rank smaller than $r$. Then both $M+M^T$ and $M-M^T$ have rank smaller than $2r$. However, since at least one $\lambda_i$ is non-zero, at least one of these is a non-trivial linear combination of the $\xsym$ or $\xalt$, and hence should have rank at least $2r$, a contradiction.

Project $\x$ to $\SymfinoneN\times\AltfinoneN\times\ColfinoneN$ with $N$ large enough such that the projection of $\x$ (which we also denote by $\x$ for convenience) still satisfies $\rk(\xsym),\rk(\xalt) \geq 2r$ and $\rk(\xcol) = n$. Write $\xma = (\xsym,\xalt)$; it has rank at least $r$. By applying some $g \in \Ginf$, we may assume the span of $\xcol$, and $(\xma)_i\xcol$ for $i \in \{1,\ldots,s\}$ is contained in $\langle e_{N-(s+1)n+1,\ldots,e_N}$. Moreover, we may assume $\xcol = \begin{pmatrix}0_{n,N-n} & \mathrm{Id}_n\end{pmatrix}$

In particular, any $(\xma)_i$ has the form $\begin{pmatrix} *_{N-(s+1)n,N-(s+1)n} & 0_{N-(s+1)n,(s+1)n}\\0_{(s+1)n,N-(s+1)n} & *_{(s+1)n,(s+1)n}\end{pmatrix}$. Here, we implicitly use the fact that any $(\xma)_i$ is either symmetric or skew-symmetric.

The projection $\overline{\xma}$ of $\xma$ to $\SymfinoneN[N-(s+1)n]\times \AltfinoneN[N-(s+1)n]\times \ColfinoneN[N-(s+1)n]$ has rank at least $r-2(s+1)n = s2^sl$ (because we remove $(s+1)n$ rows and columns), and hence there is $V \subseteq \CC^{N-(s+1)n}$ of dimension $2^sl$ such that $\overline{\xma} V$ has dimension $s2^sl$. But then the same is true for $\xma$, and we have $\xma V \subseteq \CC^{N-(s+1)n}$.

By Lemma~\ref{lm:optionsG}, and after permuting coordinates and projecting down to $\SymfinoneN[(s+1)l+n]\times \AltfinoneN[(s+1)l+n]\times \ColfinoneN[(s+1)l+n]$ we may now assume that we have \[
\xma=\left(
\begin{bmatrix}
0_l & *_{l,sl+n} \\
\mathrm{Id}_l & *_{l,sl+n} \\
0_l & *_{l,sl+n} \\
\vdots & \vdots \\
0_l & *_{l,sl+n} \\
0_{n,l} & *_{n,n}
\end{bmatrix},
\begin{bmatrix}
0_l & * \\
0_l & * \\
\mathrm{Id}_l & * \\
\vdots & \vdots \\
0_l & * \\
0_{n,l} & *\end{bmatrix},
\ldots,
\begin{bmatrix}
0_l & * \\
0_l & * \\
0_l & * \\
\vdots & \vdots \\
\mathrm{Id}_l & * \\
0_{n,l} & *\end{bmatrix}
\right),
\] that $\xcol = \begin{pmatrix}0_{n,(s+1)l},\mathrm{Id}_n\end{pmatrix}$, and that the last $n$ rows of each $(\xma)_i$ are of the form $\begin{pmatrix}0_{n,(s+1)l} & *_{n,n}\end{pmatrix}$. Since each of the $(\xma)_i$ is symmetric or skew-symmetric, we have similar properties for the first $l$ rows and the last $n$ columns of each $(\xma)_i$. Using a proof similar to the proof of Corollary~\ref{cor:closureG}, we may assume without loss of generality that the $j,k$-th entry of $(\xma)_i$ is zero for all $j,k \in \{l+1,\ldots,(s+1)l\}$.

Let $\ma' \in \Symfinone\times\Altfinone$, and let $\col' \in \Colfinone$. Let $\tilde{\ma}_i$ be the final $n\times n$ block of $(\xma)_i$, and write $\ma''_i = \col'\tilde{\ma}_i(\col')^T$. We now define \[g = \mathrm{Id} + \begin{pmatrix}0_l & \frac{1}{2}(\ma'_1-\ma''_1) & \frac{1}{2}(\ma'_2-\ma''_2) & \ldots & \frac{1}{2}(\ma'_s-\ma''_s) & \col'\\ \\ & & 0_{n-l,n}\end{pmatrix}.\] One now verifies by direct computation that the first $n\times l$ block of $g\xcol$ equals $\col'$, and that the first $l\times l$ block of $g(\xma)_ig^T$ equals $\ma'_i$. This concludes the proof.
\end{proof}

Note that we do not prove that the bound $r \geq s2^sl+2(s+1)n$ is sharp. In fact, one can use simple arguments to reduce the bound to $r\geq s2^sl+(s+1)n$. Even then, in the case $s=1, n=0$ for example, one can easily see that to get a dense orbit in the space of $l \times l$ symmetric or skew-symmetric matrices, it suffices to take $r\geq l$ rather than $r \geq 2l$.

\end{document}